\newtheorem{theorem}{Theorem}[section]
\newtheorem{lemma}{Lemma}[section]
\newtheorem{proposition}{Proposition}[section]
\newtheorem{corollary}{Corollary}[section]
\theoremstyle{definition}
\newtheorem{definition}{Definition}[section]
\newtheorem{example}{Example}[section]
\theoremstyle{remark}
\newtheorem{remark}{Remark}[section]
\numberwithin{equation}{section}
\begin{document}
\title[\bf Some results on Higher orders quasi-isometries ]
{\bf  Some results on Higher orders quasi-isometries }
\author[ O. A. M. Sid Ahmed  A. Saddi, Khadija Gherairi]{ Sid Ahmed Ould Ahmed Mahmoud ,   Adel SADDI and Khadija Gherairi  }
\address{ Sid Ahmed Ould Ahmed Mahmoud  \endgraf
  Mathematics Department, College of Science, Jouf
University,\endgraf Sakaka P.O.Box 2014. Saudi Arabia}
\email{sidahmed@ju.edu.sa}
\address{
  Adel SADDI \endgraf
  Mathematics Department, College of Science,\endgraf Gabes
University. Tunisia}
 \email{saddi.adel@gmail.com}
\address{
 Khadija Gherairi  \endgraf
Laboratory of Mathematics and Applications, College of Sciencs,\endgraf Gabes
University. Tunisia}
\email{khadija.gherairi@gmail.com}
\keywords{ $m$-isometry, strict $m$-isometry $n$-quasi-$m$-isometry.}
\subjclass[2010]{Primary 47B99; Secondary 47A05.}
\begin{abstract}
The purpose of the present paper is to pursue further study of a
class of linear bounded operators, known as $n$-quasi-$m$-isometric operators acting
on an infinite complex separable Hilbert space ${\mathcal H}$. This generalizes the class of $m$-isometric operators on Hilbert space introduced by Agler and Stankus in \cite{AS}.
 The class of
$n$-quasi-$m$-isometric operators was defined by S. Mecheri and T. Prasad in \cite{MPR}
, where they have given some of their properties. Based, mainly, on  \cite{BMM}, \cite{BMN}, \cite{TMVN} and \cite{PD}, we contribute some other properties of such operators.
\end{abstract}
\date{\mbox{{\emph{November 3-2018}}}}
\maketitle

\section{Introduction}

\vspace*{1.5pt}
\quad  Throughout this paper $\mathbb{N}$  denotes the set of non negative integers, ${\mathcal H}$ stands for  an infinite separable complex Hilbert space with inner product
$\langle .\;| \;. \rangle$, ${\mathcal L}({\mathcal H})$ is the Banach algebra of all bounded linear operators on ${\mathcal H}$ and $I=I_{\mathcal H}$  the identity operator. For every $T\in {\mathcal B}({\mathcal H})$  we denote  by $\mathcal{R}(T)$, ${\mathcal N}(T)$ and $T^*$  the range, the null space and the adjoint of $T$  respectively. A  closed subspace ${\mathcal M}\subset {\mathcal H}$   is invariant for $T$ (or $T$-invariant) if $T( {\mathcal M})\subset {\mathcal M}$. As usual, the orthogonal complement and  the closure of ${\mathcal M}$ are denoted ${\mathcal M}^\perp$ and $\overline{\mathcal{M}}$ respectively. We denote by $P_\mathcal{M}$ the orthogonal projection on ${\mathcal M}$.\par \vskip 0.2 cm \noindent
Some of the most important subclasses of the algebra of all bounded linear operators
acting on a Hilbert space, are the classes of partial isometries  and quasi-isometries. An operator $T \in {\mathcal L}({\mathcal H})$
is said to be an isometry if $T^*T = I$, a partial isometry if $TT^*T = T$ and quasi-isometry if $T^{*2}T^2=T^*T.$ \par\vskip 0.2 cm \noindent In recent years
these classes has been generalized, in some sense, to the larger sets of operators so-called
$m$-isometries, $m$-partial isometries and $n$-quasi-isometries. An operator $T \in {\mathcal L}({\mathcal H})$ is said to be \par \vskip 0.2 cm \noindent $(1)$ $m$-isometric operator
for some integer $m \geq 1$ if it satisfies the operator equation
\begin{equation}\label{(1.1)}
\sum_{0\leq k\leq m}(-1)^{m-k}\binom{m}{k}T^{\ast k}T^k=0.
\end{equation}
It is immediate that $T$ is $m$-isometric operator if and only if
\begin{equation}\label{(1.2)}
\sum_{0\leq k\leq m}(-1)^{m-k}\binom{m}{k}\|T^kx\|^2=0\quad \forall\;x\in{\mathcal H},
\end{equation}
\noindent $(2)$
$m$-partial-isometry  for some  integer $m\geq 1$ if
\begin{eqnarray}\label{(1.3)}
T\bigg(\sum_{0\leq k\leq m}(-1)^k\binom{m}{k}T^{*m-k}T^{m-k}\bigg)=0,
\end{eqnarray}
\noindent$(3)$ $(m,q)$-partial isometry (or $q$-partial-$m$-isometry) if
\begin{eqnarray}\label{(1.4)}
T^q\bigg(\sum_{0\leq k\leq m}(-1)^k\binom{m}{k}T^{*m-k}T^{m-k}\bigg)=0,
\end{eqnarray}
$(4)$ \ $m$-quasi-isometry for some integer $n\geq 1$ if
\begin{eqnarray}\label{(1.5)}
T^{*m+1}T^{m+1}-T^{*m}T^m=0.
\end{eqnarray}
It is immediate that $T$ is $m$-quasi-isometric if and only if
$$ T^{*m}\bigg(T^*T-I\bigg)T^m=0.$$
Here $\displaystyle \binom{m}{k}$ is the binomial coefficient. In
\cite{AS}, J. Agler and M. Stankus initiated the study of operators $T$ that satisfy
the identity (\ref{(1.1)}). In  \cite{SA}, A. Saddi  and O. A. M. Sid Ahmed studied
operator $T$ which satisfies $(\ref{(1.3)})$. This concept was later generalized to the operators satisfying $(\ref{(1.4)})$,
 was defined  by O. A. M. Sid Ahmed \cite{OS1}. The study of operators satisfying $(\ref{(1.5)})$  was introduced and study by  L. Suciu in  \cite{LS}.
 The $1$-quasi-isometries are shortly called quasi-isometries, such
operators being firstly studied  in \cite{SP1} and \cite{SP2}.
. \par\vskip 0.2 cm \noindent
Recently, S. Mecheri and T. Prasad \cite{MPR} introduced the class of $n$-quasi-$m$-isometric
operators which generalizes the class of $m$-isometric operators and $n$-quasi-isometries. For positive integers $m$
and $n$, an operator $T \in {\mathcal L}({\mathcal H})$ is said to be an $n$-quasi-$m$-isometric operator if
\begin{eqnarray}\label{(1.6)}
T^{*n}\bigg(\sum_{0\leq k\leq m}(-1)^k\binom{m}{k}T^{*m-k}T^{m-k}\bigg)T^n=0,
\end{eqnarray}
After an introduction on the subject and some connection with known
facts in this context, the results of the paper are briefly described. In section two, we give a matrix characterization
of $n$-quasi-$m$-isometries by using the decomposition  ${\mathcal H}=\overline{T^n({\mathcal H})} \oplus {T^{*}}^{-n}(0)$.  Several properties are proved by exploiting the special kind of operator matrix representation associated with such operators. In the course of our investigation, we find  some
properties of $m$-isometries  which are retained by $n$-quasi-$m$-isometries.
In particular, we show that if $ T\in {\mathcal L}({\mathcal H})$ is an $n$-quasi-isometry then its power is an $n$-quasi-isometry. If  $T$ and $S$  are doubly commuting such that $T$ is an $n_1$-quasi-$m$-isometry and $S$ is an $n_2$-quasi-$l$-isometry, then
$TS$ is a $n_0 = \max\{n_1, n_2\}$-quasi-$(m +l-1)$-isometry. It has also been proved  that the sum of an $n$-quasi-$m$-isometry and a commuting
nilpotent operator of degree $p$ is a $2\max\{n, p\}$-quasi-$(m+2p-2)$-isometry.
In section three, we recall the definition of $n$-quasi strict-$m$-isometries and
we give some of their properties which are  similar to those of $n$-quasi-$m$-isometries.

\section{Some properties of $n$-quasi-$m$-isometric operators}
\par\vskip 0.2 cm \noindent
 In this section, we study some further  properties of $n$-quasi-$m$-isometries. First, we will start with the following notations. \par\vskip 0.2 cm \noindent
 For $T\in {\mathcal L}({\mathcal H})$, we set
\vspace{-2mm}
\begin{equation} \label{2.1}
\beta_m(T):=\sum_{0\leq k\leq m}(-1)^{m-k}\binom{m}{k}T^{\ast k}T^k,\end{equation}
\begin{equation}\label{2.2}
\beta_{m,\;n}(T):=T^{\ast n}\bigg(\sum_{0\leq k\leq m}(-1)^{m-k}\binom{m}{k}T^{\ast k}T^k\bigg)T^n
\end{equation}
and
\begin{equation}\label{2.3}
\Delta_{m,\;n}(T,x):=\sum_{0\leq k \leq m}(-1)^{m-k}\binom{m}{k}\|T^{k+n}x\|^2,\quad \;\;\;\;x\in{\mathcal H}.
\end{equation}
Observe that $T$ is an $n$-quasi-$m$-isometric operator if  and only if $\beta_{m,\;n}(T)=0$ or equivalently if
\begin{equation*}\label{2.4}
\Delta_{m,\;n}(T,x)=0\quad \forall\;\;x\in{\mathcal H}.
\end{equation*}
\vspace {0.5 mm}
\begin{lemma}\label{lem21}
Let $T\in {\mathcal L}({\mathcal H})$, then $T$ is an $n$-quasi-$m$-isometric operator if and only if
\begin{equation}
\sum_{0\leq k\leq m}(-1)^{m-k}\binom{m}{k}\|T^{k}x\|^2=0\quad \forall\;\;x\in \overline{{\mathcal R}(T^n)},
\end{equation}
or $T$ is an $m$-isometric operator on  $\overline{{\mathcal R}(T^n)}.$
\end{lemma}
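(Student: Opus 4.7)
The plan is to unwind the definition $\beta_{m,n}(T) = T^{*n}\beta_m(T)T^n = 0$ into a statement about the quadratic form $x \mapsto \langle \beta_m(T) x \mid x\rangle$ restricted to $\mathcal{R}(T^n)$, then transfer it to the closure by continuity.

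First I would observe that $\beta_m(T)$, being a finite real linear combination of the self-adjoint operators $T^{*k}T^k$, is itself self-adjoint; consequently $A := T^{*n}\beta_m(T)T^n$ is self-adjoint. A self-adjoint bounded operator is zero iff its associated quadratic form vanishes identically, so
\[
T \text{ is } n\text{-quasi-}m\text{-isometric} \;\iff\; \langle A x \mid x\rangle = 0 \quad \forall x \in \mathcal{H}.
\]
A direct computation using $\|T^{k+n}x\|^2 = \langle T^{*k}T^k T^n x \mid T^n x\rangle$ gives
\[
\langle Ax \mid x\rangle = \langle \beta_m(T) T^n x \mid T^n x\rangle = \sum_{0\leq k\leq m}(-1)^{m-k}\binom{m}{k}\|T^{k+n}x\|^2 = \Delta_{m,n}(T,x).
\]

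Next, setting $y = T^n x$, this shows that $T$ is $n$-quasi-$m$-isometric if and only if
\[
\varphi(y) := \sum_{0\leq k\leq m}(-1)^{m-k}\binom{m}{k}\|T^{k}y\|^2 = 0 \quad \forall\, y\in \mathcal{R}(T^n).
\]
To promote this to $\overline{\mathcal{R}(T^n)}$ I invoke continuity: each map $y \mapsto \|T^k y\|^2$ is continuous on $\mathcal{H}$ (it is the composition of the bounded linear map $T^k$ with the squared norm), so $\varphi$ is continuous; hence $\varphi \equiv 0$ on $\mathcal{R}(T^n)$ is equivalent to $\varphi \equiv 0$ on $\overline{\mathcal{R}(T^n)}$. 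The converse direction is trivial since $\mathcal{R}(T^n) \subseteq \overline{\mathcal{R}(T^n)}$.

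Finally, for the reformulation as $m$-isometry on $\overline{\mathcal{R}(T^n)}$, I would note that $\overline{\mathcal{R}(T^n)}$ is $T$-invariant (because $T\mathcal{R}(T^n) = \mathcal{R}(T^{n+1}) \subseteq \mathcal{R}(T^n)$ and $T$ is continuous), so the restriction $T_1 := T|_{\overline{\mathcal{R}(T^n)}}$ is a well-defined bounded operator for which $T_1^k y = T^k y$ for every $y$ in the subspace. The displayed identity is then exactly condition \eqref{(1.2)} applied to $T_1$, giving the equivalence with $T_1$ being an $m$-isometry. No step presents a real obstacle; the only point requiring a hint of care is the continuity argument needed to pass from $\mathcal{R}(T^n)$ to its closure, but this is immediate once $\varphi$ is recognized as continuous.
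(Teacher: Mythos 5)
Your proof is correct and is precisely the routine argument the paper suppresses (its own proof reads only ``Obvious''): self-adjointness of $\beta_m(T)$ to reduce to the quadratic form, the identity $\langle\beta_{m,n}(T)x\mid x\rangle=\Delta_{m,n}(T,x)$, continuity to pass from $\mathcal{R}(T^n)$ to its closure, and $T$-invariance of $\overline{\mathcal{R}(T^n)}$ to interpret the identity as condition \eqref{(1.2)} for the restriction. No gaps; nothing further to compare.
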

\begin{proof}
Obvious.
\end{proof}
\vspace {0.5 mm}
 Let $\mathbb{Z}$ denote
the set of integers and $\mathbb{Z}_+$ denote the set of nonnegative integers.
\vspace {0.5 mm}
\begin{lemma}\label{lem22}(\cite[Lemma 5.4]{GU})
 Let $\big(a_j\big)_{j\in \mathbb{Z}_+}$
be a sequence of real numbers. Then
$$\sum_{0\leq k \leq m}(-1)^{m-k}\binom{m}{k}a_{j+k}=0\;\;\;\text{for}\;\;j\geq 0$$
if and only if there exists a polynomial $P$ of degree less than or equal to $m-1$
such that $a_j = P(j)$. In this case $P$ is the unique polynomial interpolating
$\{(j, a_j)\}$, $0 \leq j\leq m-1.$
\end{lemma}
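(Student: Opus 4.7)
The plan is to recognize the left-hand side as the $m$-th forward difference operator applied to the sequence, namely $(\Delta^m a)_j := \sum_{k=0}^{m}(-1)^{m-k}\binom{m}{k}a_{j+k}$, and then prove the two implications separately using standard finite-difference machinery.

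For the ``if'' direction, I would proceed by induction on $m$. The base case $m=1$ reduces to the observation that $a_{j+1}-a_j=0$ for all $j$ is equivalent to $a_j$ being constant, i.e.\ a polynomial of degree $\le 0$. For the inductive step, I would exploit the factorization $\Delta^m = \Delta \circ \Delta^{m-1}$ together with the fact that the forward difference of a polynomial of degree $d$ is a polynomial of degree $d-1$; hence the $m$-th difference of any polynomial of degree $\le m-1$ vanishes identically.

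For the ``only if'' direction, the plan is: let $P$ be the unique polynomial of degree $\le m-1$ interpolating the $m$ data points $(0,a_0),(1,a_1),\ldots,(m-1,a_{m-1})$ (existence and uniqueness come from Lagrange interpolation, since the Vandermonde matrix of the $m$ distinct nodes $0,1,\ldots,m-1$ is invertible). Set $b_j := a_j - P(j)$. By construction $b_0 = b_1 = \cdots = b_{m-1} = 0$. By the ``if'' direction already proved, $\Delta^m P(j) = 0$ for all $j \ge 0$, and by hypothesis $\Delta^m a_j = 0$; subtracting gives $\Delta^m b_j = 0$ for all $j\ge 0$. Now I would isolate the top term of $\Delta^m b_j$, writing
\begin{equation*}
b_{j+m} = -\sum_{k=0}^{m-1}(-1)^{m-k}\binom{m}{k}b_{j+k},
\end{equation*}
and conclude by a simple induction on $j$ that $b_{j+m}=0$ for every $j\ge 0$, hence $a_j = P(j)$ for all $j\in\mathbb{Z}_+$.

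The main obstacle is not conceptual — both directions are classical — but bookkeeping: making sure the sign and binomial conventions are consistent throughout, and explicitly verifying that the Lagrange interpolant $P$ has degree at most $m-1$ so that the ``if'' direction applies to it. The uniqueness assertion at the end of the lemma then follows for free, because any two polynomials of degree $\le m-1$ agreeing at the $m$ points $j=0,1,\ldots,m-1$ must coincide.
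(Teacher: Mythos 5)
Your argument is correct and complete. Note that the paper does not actually prove this lemma: it is quoted verbatim from \cite[Lemma 5.4]{GU} and used as a black box, so there is no in-text proof to compare yours against; what you have written is the standard finite-difference argument that the cited source (and any textbook treatment) would give. Both halves check out: identifying the sum as the $m$-th forward difference $\Delta^m a_j$, the degree-lowering property of $\Delta$ disposes of the ``if'' direction, and for the ``only if'' direction the coefficient of $a_{j+m}$ in $\Delta^m a_j$ is $(-1)^{0}\binom{m}{m}=1$, so your recursion $b_{j+m}=-\sum_{0\leq k\leq m-1}(-1)^{m-k}\binom{m}{k}b_{j+k}$ is legitimately solvable for the top term and, combined with $b_0=\cdots=b_{m-1}=0$, forces $b_j\equiv 0$ by induction on $j$. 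The uniqueness clause follows, as you say, from the fact that two polynomials of degree at most $m-1$ agreeing at the $m$ nodes $0,1,\ldots,m-1$ coincide. The only cosmetic remark is that in the base case $m=1$ you state an equivalence where the ``if'' direction needs only one implication, but since the equivalence is true this costs nothing.
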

\vspace {0.5 mm}
\begin{proposition}\label{prop21}
Let $T\in {\mathcal L}({\mathcal H})$ and $x\in {\mathcal H}$. We set   $a_{k+n}:= \|T^{n+k}x\|^2$. Then
$T$ is an $n$-quasi-$m$-isometry if and only if for each  $ x\in {\mathcal H}$, there exists a polynomial
$P$   of degree less than or equal to $m-1$, such that  $a_k = P(k)$ for  $n \in \mathbb{Z}_+$.
\end{proposition}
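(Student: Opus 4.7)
The plan is to apply Lemma \ref{lem22} to the shifted sequence $b_j := a_{j+n} = \|T^{n+j}x\|^2$, $j \in \mathbb{Z}_+$, with $x \in {\mathcal H}$ fixed. The point is that the definition of an $n$-quasi-$m$-isometry only directly gives the $m$-th finite difference condition on $b_j$ starting at $j=0$, but by substituting $T^j x$ for $x$ we obtain the same relation at every starting index, which is exactly the hypothesis of Lemma \ref{lem22}.

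For the direct implication, assume $T$ is an $n$-quasi-$m$-isometry, so that by the observation following (\ref{2.3}) we have $\Delta_{m,n}(T,y)=0$ for every $y \in {\mathcal H}$. I would fix $x \in {\mathcal H}$ and, for each $j \geq 0$, apply this identity with $y = T^j x$ to get
\begin{equation*}
\sum_{0 \leq k \leq m} (-1)^{m-k}\binom{m}{k}\|T^{k+n+j}x\|^2 = 0,
\end{equation*}
i.e.\ $\sum_{0 \leq k \leq m}(-1)^{m-k}\binom{m}{k} b_{j+k}=0$ for all $j \geq 0$. Lemma \ref{lem22} then furnishes a (unique) polynomial $P$ of degree at most $m-1$ such that $b_j=P(j)$, equivalently $a_{k+n}=P(k)$ for all $k \in \mathbb{Z}_+$.

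For the converse, suppose that for each $x\in{\mathcal H}$ there is a polynomial $P$ of degree $\leq m-1$ with $a_{k+n}=P(k)$. Since the $m$-th finite difference of any polynomial of degree less than $m$ is identically zero, I would compute
\begin{equation*}
\Delta_{m,n}(T,x)=\sum_{0\leq k\leq m}(-1)^{m-k}\binom{m}{k}a_{k+n} = \sum_{0\leq k\leq m}(-1)^{m-k}\binom{m}{k}P(k)=0,
\end{equation*}
which, being valid for every $x\in{\mathcal H}$, says exactly that $T$ is an $n$-quasi-$m$-isometry.

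There is essentially no serious obstacle here; the only conceptual point to highlight is the shift-invariance step in the forward direction, where one must apply the defining identity to the vectors $T^j x$ rather than just to $x$ in order to hit the full hypothesis of Lemma \ref{lem22}. Everything else is a direct rewriting of the definitions in (\ref{2.2}) and (\ref{2.3}).
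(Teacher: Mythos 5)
Your proof is correct and follows the same route the paper intends: the paper's own ``proof'' is a one-line citation of Lemma \ref{lem21}, Lemma \ref{lem22} and \cite[Theorem 5.5]{GU}, and you have simply supplied the details, in particular the key shift step of applying the defining identity to the vectors $T^jx$ so that Lemma \ref{lem22} becomes applicable. No discrepancies to report.
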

\begin{proof}
The proof is a consequence of  Lemma \ref{lem21}, Lemma \ref{lem22} and \cite[Theorem 5.5]{GU}.
\end{proof}

In \cite{MPR}, S. Mecheri and T. Prasad studied the matrix representation of $n$-quasi-$m$-isometric  operator
with respect to the direct sum of $\overline{{\mathcal R}(T^n)}$ and its orthogonal complement. In the
following we give an equivalent condition for $T$ to be $n$-quasi-$m$-isometric operator. Using this result
we obtained several important properties  of this class of operators.
\vspace {-0.4 mm}
\begin{theorem}\label{th21}
Let $T\in {\mathcal L}({\mathcal H})$ such that $\overline{{\mathcal R}(T^n)} \not={\mathcal H}$, then the following statements are equivalent.\par \vskip 0.2 cm \noindent $(1)$ $T$ is an $n$-quasi-$m$-isometric operator.\par \vskip 0.2 cm \noindent $(2)$ $T=\left(
      \begin{array}{cc}
        T_1 & T_2 \\
        0 & T_3 \\
      \end{array}
    \right)$ on ${\mathcal H}=\overline{{\mathcal R}(T^n)}\oplus {\mathcal N}(T^{*n})$, where $T_1$ is an $m$-isometric operator and
    $T_3^n=0$.
\end{theorem}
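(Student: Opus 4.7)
The plan is to work with the standard orthogonal decomposition $\mathcal{H} = \overline{\mathcal{R}(T^n)} \oplus \mathcal{N}(T^{*n})$ (which holds because $\mathcal{N}(S^*) = \mathcal{R}(S)^\perp$ for any $S$) and note that $\overline{\mathcal{R}(T^n)}$ is $T$-invariant: for any $y = \lim T^n x_k$ we have $Ty = \lim T^n(Tx_k) \in \overline{\mathcal{R}(T^n)}$. Consequently the matrix of $T$ with respect to this decomposition is automatically upper triangular, which supplies the block form in (2) together with operators $T_1$ on $\overline{\mathcal{R}(T^n)}$, $T_3$ on $\mathcal{N}(T^{*n})$, and a coupling term $T_2$. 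With this setup in place, both implications reduce to bookkeeping using Lemma~\ref{lem21}.

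For $(1)\Rightarrow(2)$, I would first apply Lemma~\ref{lem21} to identify $T_1$ as the restriction of $T$ to the invariant subspace $\overline{\mathcal{R}(T^n)}$ and conclude that $T_1$ is an $m$-isometric operator directly from the $n$-quasi-$m$-isometric hypothesis. To obtain $T_3^n=0$, I would compute
\[
T^n = \begin{pmatrix} T_1^n & A \\ 0 & T_3^n \end{pmatrix}
\qquad\text{hence}\qquad
T^{*n} = \begin{pmatrix} T_1^{*n} & 0 \\ A^{*} & T_3^{*n} \end{pmatrix}
\]
for a suitable operator $A$, and then pick an arbitrary $y\in\mathcal{N}(T^{*n})$, identified with $\binom{0}{y}$ in the decomposition. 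Applying $T^{*n}$ to this vector yields $\binom{0}{T_3^{*n}y}$, which must vanish by definition of $y$. Since $y$ was arbitrary, $T_3^{*n}=0$ and therefore $T_3^n=0$. (This half in fact uses only the geometry of the decomposition; the $n$-quasi-$m$-isometric hypothesis enters only through Lemma~\ref{lem21} to get $T_1$.)

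For $(2)\Rightarrow(1)$, I would again appeal to Lemma~\ref{lem21}: it suffices to check that $\sum_{k=0}^m (-1)^{m-k}\binom{m}{k}\|T^k x\|^2 = 0$ for every $x\in\overline{\mathcal{R}(T^n)}$. For such $x$, written as $\binom{x_1}{0}$ in the decomposition, the vanishing lower-left block forces $T^k x = \binom{T_1^k x_1}{0}$ for every $k\ge 0$, so $\|T^k x\|^2 = \|T_1^k x_1\|^2$. The assumption that $T_1$ is $m$-isometric on $\overline{\mathcal{R}(T^n)}$ then makes the alternating sum vanish identically, which by Lemma~\ref{lem21} is equivalent to $T$ being $n$-quasi-$m$-isometric.

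The main obstacle is conceptual rather than technical: one must be comfortable tracking the block form of the powers $T^n$ and $T^{*n}$ and, in particular, recognizing that the condition $T_3^n=0$ is really a statement about how $T^{*n}$ acts on $\mathcal{N}(T^{*n})$ viewed as a summand. Once one observes that the lower-left block of every power of $T$ remains zero (by invariance of $\overline{\mathcal{R}(T^n)}$), both implications reduce to a single application of Lemma~\ref{lem21}.
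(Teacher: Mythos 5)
Your argument is correct, and it reaches the conclusion by a somewhat different (and in places cleaner) route than the paper. You use the same orthogonal decomposition ${\mathcal H}=\overline{{\mathcal R}(T^n)}\oplus {\mathcal N}(T^{*n})$ and the same upper-triangular block form, but where the paper proves $(2)\Rightarrow(1)$ by expanding the full operator matrix of $\beta_{m,n}(T)=T^{*n}\big(\sum_k(-1)^{m-k}\binom{m}{k}T^{*k}T^k\big)T^n$ and checking block by block that it vanishes, you work at the level of the quadratic form: for $x\in\overline{{\mathcal R}(T^n)}$ the lower-left blocks of all powers vanish, so $\|T^kx\|^2=\|T_1^kx_1\|^2$, and Lemma~\ref{lem21} does the rest. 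This avoids the bookkeeping with the off-diagonal entries (which the paper's displayed computation glosses over --- those entries are only zero because $\beta_m(T_1)=0$ kills everything, not for the structural reason the display suggests). For $(1)\Rightarrow(2)$ the paper simply cites \cite[Lemma 2.1]{MPR}, whereas you give a self-contained argument: $T_1$ is $m$-isometric by Lemma~\ref{lem21} applied to the $T$-invariant subspace $\overline{{\mathcal R}(T^n)}$, and $T_3^n=0$ follows from $T_3^{*n}=0$, obtained by applying $T^{*n}$ to vectors of ${\mathcal N}(T^{*n})$; your observation that this second point is purely geometric and independent of the $n$-quasi-$m$-isometry hypothesis is accurate (it is the same computation the paper later carries out inside the proof of Theorem~\ref{2.4}). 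Both approaches are sound; yours trades the explicit matrix identity for two applications of Lemma~\ref{lem21}, which makes the equivalence with the lemma's formulation more transparent, while the paper's computation has the side benefit of exhibiting $\beta_{m,n}(T)$ explicitly in block form.
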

\begin{proof} $(1)\Rightarrow (2)$,  follows  by \cite[Lemma 2.1]{MPR}.\par \vskip 0.2 cm \noindent
\noindent $(2)\Rightarrow (1)$ Suppose that $T=\left(
      \begin{array}{cc}
        T_1 & T_2 \\
        0 & T_3 \\
      \end{array}
    \right)$ onto ${\mathcal H}=\overline{{\mathcal R}(T^n)}\oplus  {\mathcal N}(T^{*n})$
    , with $$ \beta_m(T_1):=\sum_{0\leq k\leq m}(-1)^{m-k}\binom{m}{k}T_1^{*k}T_1^k=0 \;\;\text{and}\;\;T_3^n=0.$$
    Since
$ T^k= \left(
      \begin{array}{cc}
        T_1 ^k& \displaystyle\sum_{0\leq  j\leq k-1}T_1^jT_2T_3^{k-1-j} \\
        0 & T_3^k \\
      \end{array}
    \right)$ we have
    \begin{eqnarray*}
  && T^{*n}\bigg(\sum_{0\leq k \leq m}(-1)^{m-k}\binom{m}{k}T^{*k}T^k\bigg)T^n\\&=&
  \left(
      \begin{array}{cc}
        T_1 & T_2 \\
        0 & T_3 \\
      \end{array}
    \right)^{*n}\bigg\{ (-1)^mI+\sum_{1\leq k\leq m}(-1)^{m-k}\binom{m}{k}\left(
      \begin{array}{cc}
        T_1 & T_2 \\
        0 & T_3 \\
      \end{array}
    \right)^{*k}\left(
      \begin{array}{cc}
        T_1 & T_2 \\
        0 & T_3 \\
      \end{array}
    \right)^k\bigg\}\\&&\times \left(
      \begin{array}{cc}
        T_1 & T_2 \\
        0 & T_3 \\
      \end{array}
    \right)^n \\&=&
    \left(
      \begin{array}{cc}
        T_1 ^{*n}& 0 \\
        \displaystyle\sum_{0\leq j\leq n-1}T_3^{*n-1-j}T_2^*T_1^{*j} & T_3^{*n} \\
      \end{array}
    \right)\\&&\times \bigg\{ (-1)^mI+\sum_{1\leq k\leq m}(-1)^k\binom{m}{k} \left( \begin{array}{cc}
        T_1 ^{*k}& 0 \\
        \displaystyle\sum_{ 0\leq j\leq k-1}T_3^{*k-1-j}T_2^*T_1^{*j}C_1 & T_3^{*k} \\
      \end{array}
    \right)\left(
      \begin{array}{cc}
        T_1 ^k& \displaystyle\sum_{j=0}^{ k-1}T_1^jT_2T_3^{k-1-j} \\
        0 & T_3^k \\
      \end{array}
    \right)\bigg\}\\&&\times \left(
      \begin{array}{cc}
        T_1 ^n& \displaystyle\sum_{j=0}^{n-1}T_1^jT_2T_3^{n-1-j} \\
        0 & T_3^n \\
      \end{array}
    \right)\\ \end{eqnarray*} \begin{eqnarray*}&=& \left(
      \begin{array}{cc}
        T_1 ^{*n}& 0 \\
        \displaystyle\sum_{0\leq j\leq n-1}T_3^{*n-1-j}T_2^*T_1^{*j} & 0 \\
      \end{array}
    \right)\\&&\times \bigg\{  \left( \begin{array}{cc}
        \Lambda_m(T_1)& C\\
        \\
        D &   B    \\
      \end{array}
    \right)  \bigg\}\times\left(
      \begin{array}{cc}
        T_1 ^n& \displaystyle\sum_{ j=0}^{ n-1}T_1^jT_2T_3^{n-1-j} \\
        0 & 0 \\
      \end{array}
    \right)\\
    %\end{eqnarray*}
    %\begin{eqnarray*}
    % T^{*n}\bigg(\sum_{0\leq k \leq m}(-1)^{m-k}\binom{m}{k}T^{*k}T^k\bigg)T^n\
    &=&\left(
           \begin{array}{cc}
             T_1^{\ast n}  \Lambda_m(T_1)T_1^n& 0 \\
             \\
               0&   0 \\
           \end{array}
         \right)\\& =& 0.
    \end{eqnarray*}
    Therefore  $\beta_{m,\;n}(T)=0.$ Thus $T$ is an $n$-quasi-$m$-isometric operator.
\end{proof}
\par\vskip 0.2 cm \noindent
For $T \in {\mathcal L}({\mathcal H})$, we denote by  $\sigma(T)$, $\sigma_{ap}(T)$ and $\sigma_p(T)$ respectively  the spectrum, the approximate point spectrum and the point spectrum of $T$.
\par \vskip 0.2 cm \noindent
\begin{corollary}\label{cor21}
$T\in {\mathcal L}({\mathcal H})$ be an $n$-quasi-$m$-isometric operator. The following statements hold.
 \begin{itemize}
 \item[(i)]\;$\sigma(T)=\sigma(T_1)\cup\{0\}$ where $T_1=T_{/\overline{{\mathcal R}(T^n)}}$.
  \item[(ii)]\; $T_1$ is bounded below.
  \item[(iii)]\; If $\mu\in\sigma_{ap}(T)\setminus\{0\}$ then ${\overline{\mu}}\in\sigma_{ap}(T^*)$. In particular, if  $\mu\in\sigma_{p}(T)\setminus\{0\}$ then ${\overline{\mu}}\in\sigma_{p}(T^*)$.

 \end{itemize}
  \end{corollary}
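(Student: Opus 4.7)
Theorem \ref{th21} supplies the matrix decomposition $T=\bigl(\begin{smallmatrix}T_1&T_2\\0&T_3\end{smallmatrix}\bigr)$ on $\mathcal{H}=\overline{\mathcal{R}(T^n)}\oplus\mathcal{N}(T^{*n})$, with $T_1$ an $m$-isometry and $T_3^n=0$. Two facts drive everything that follows: (a) $T_3$ is nilpotent, so $\sigma(T_3)=\{0\}$ and $T_3-\lambda$ is invertible whenever $\lambda\neq 0$; (b) $T_1$ enjoys the classical spectral properties of $m$-isometries from \cite{AS}, namely injectivity, boundedness below, and conjugation symmetry between $\sigma_{ap}(T_1)$ and $\sigma_{ap}(T_1^*)$.

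\textbf{Proofs of (i) and (ii).} For (i), the containment $\sigma(T)\subseteq\sigma(T_1)\cup\sigma(T_3)=\sigma(T_1)\cup\{0\}$ is the standard one for upper-triangular block operators. For the reverse, fix $\lambda\neq 0$ and write down the explicit block inverse $(T-\lambda)^{-1}=\bigl(\begin{smallmatrix}(T_1-\lambda)^{-1}&-(T_1-\lambda)^{-1}T_2(T_3-\lambda)^{-1}\\0&(T_3-\lambda)^{-1}\end{smallmatrix}\bigr)$ whenever both diagonal inverses exist; hence $\lambda\in\rho(T)\cap(\mathbb{C}\setminus\{0\})$ iff $\lambda\in\rho(T_1)$. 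Since $T_3$ acts nilpotently on the nontrivial space $\mathcal{N}(T^{*n})$, $T$ itself is not invertible, so $0\in\sigma(T)$. Part (ii) is immediate: $T_1$ is an $m$-isometry on $\overline{\mathcal{R}(T^n)}$, and every $m$-isometry is bounded below — a consequence of Lemma \ref{lem21} and Proposition \ref{prop21} applied to $T_1$ with $n=0$ (the polynomial $k\mapsto\|T_1^k x\|^2$ is nonnegative on $\mathbb Z_+$ with value $\|x\|^2$ at $k=0$, and standard $m$-isometry theory turns this into a uniform lower bound on $\|T_1 x\|$).

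\textbf{Proof of (iii) and main obstacle.} Fix $\mu\in\sigma_{ap}(T)\setminus\{0\}$ and unit vectors $x_k=u_k\oplus v_k$ with $(T-\mu)x_k\to 0$. The second component reads $(T_3-\mu)v_k\to 0$; invertibility of $T_3-\mu$ forces $v_k\to 0$, so $\|u_k\|\to 1$ and $(T_1-\mu)u_k\to 0$, giving $\mu\in\sigma_{ap}(T_1)$. The $m$-isometry property then yields $|\mu|=1$ together with $\overline{\mu}\in\sigma_{ap}(T_1^*)$. To lift back to $T^*$, pick unit vectors $w_k\in\overline{\mathcal{R}(T^n)}$ realising $(T_1^*-\overline{\mu})w_k\to 0$, and set $z_k=-(T_3^*-\overline{\mu})^{-1}T_2^*w_k$; then $y_k=w_k\oplus z_k$ satisfies $(T^*-\overline{\mu})y_k=(T_1^*w_k-\overline{\mu}w_k)\oplus 0\to 0$ with $\|y_k\|\geq 1$, so normalising yields $\overline{\mu}\in\sigma_{ap}(T^*)$. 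The point-spectrum assertion follows the same pattern: $Tx=\mu x$ with $\mu\neq 0$ forces $v=0$ (from $(T_3-\mu)v=0$), hence $T_1 u=\mu u$; the known $m$-isometry identity $T_1^* u=\overline{\mu}u$ then makes $y=u\oplus(-(T_3^*-\overline{\mu})^{-1}T_2^*u)$ a genuine eigenvector of $T^*$ at $\overline{\mu}$. The main obstacle is this lifting step: the corrector $z_k$ exists only because $T_3^*-\overline{\mu}$ is boundedly invertible, which in turn requires $\mu\neq 0$ — precisely the hypothesis that excludes $\mu=0$ from the statement.
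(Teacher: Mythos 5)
Your treatment of (i) and (ii) is correct and in fact more self-contained than the paper's: where the paper invokes the Han--Lee--Lee results on spectra of upper-triangular operator matrices (\cite[Corollaries 7 and 8]{JHW}) to get $\sigma(T)=\sigma(T_1)\cup\sigma(T_3)$, you exploit the fact that $T_3-\lambda$ is invertible for every $\lambda\neq 0$ and write down the block inverse directly, which settles the equality of the spectra away from $0$ with no filled-holes machinery (note that, like the paper, you are implicitly in the case $\mathcal{N}(T^{*n})\neq\{0\}$ of Theorem \ref{th21}, which is also what makes $0\in\sigma(T)$). Your reduction in the approximate-point-spectrum half of (iii) --- push an approximate eigenvector down to $T_1$ using invertibility of $T_3-\mu$, apply the $m$-isometry result to $T_1$, then lift an approximate eigenvector of $T_1^*$ back to $T^*$ via the corrector $z_k=-(T_3^*-\overline{\mu})^{-1}T_2^*w_k$ --- is exactly the reduction the paper leaves unstated when it writes that (iii) ``follows from \cite[Theorem 2.2]{AS}'', and that lifting step is sound.

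The point-spectrum half of (iii), however, contains a genuine error: the ``known $m$-isometry identity $T_1^*u=\overline{\mu}u$'' is false. Take $T_1=\left(\begin{smallmatrix}1&1\\0&1\end{smallmatrix}\right)$ on $\mathbb{C}^2$, which is a strict $3$-isometry (the paper itself uses this operator in Section 3); then $T_1e_1=e_1$ but $T_1^*e_1=e_1+e_2\neq e_1$. The identity you are thinking of holds for isometries via $\|T^*u-\overline{\mu}u\|^2=\|T^*u\|^2-\|u\|^2\leq 0$, but $m$-isometries need not be contractions, so an eigenvector of $T_1$ at $\mu$ need not be an eigenvector of $T_1^*$ at $\overline{\mu}$. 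What you actually need is the weaker, eigenvalue-level statement $\overline{\mu}\in\sigma_p(T_1^*)$, i.e.\ that $\mathcal{R}(T_1-\mu)$ is not dense; that is the fact to extract from \cite[Theorem 2.2]{AS} (or to prove separately), and it supplies some eigenvector $w$ of $T_1^*$, in general different from $u$. Your lifting $y=w\oplus\bigl(-(T_3^*-\overline{\mu})^{-1}T_2^*w\bigr)$ then goes through verbatim with this $w$, so the argument is repairable, but as written that step does not stand.
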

  \begin{proof}
  $(i)$\; Since $T$ is an $n$-quasi-$m$-isometric operator, it follows by Theorem \ref{th21} that
  $$ T=\left(
         \begin{array}{ccc}
           T_1 & T_2 \\
          0 & T_3
         \end{array}
       \right)\;\;\text{on}\;\;{\mathcal H} =\overline{{\mathcal R}(T^n)}\oplus {\mathcal N}(T^{*n}),$$ where $T_1$ is an $m$-isometric operator and $T_3^n=0$.
   From \cite[Corollary 7]{JHW}, it follows that $\sigma(T)\cup W=\sigma(T_1)\cup\sigma(T_3)$, where $W$ is the union of certain of the holes in $\sigma(T)$   which is a subset
of $\sigma(T_1)\cap \sigma(T_3)$.  Further $\sigma{(T_3)}=\{0\}$  and $\sigma(T_1)\cap \sigma(T_3)$ has no
interior points. So we have  by \cite[Corollary 8]{JHW}
$$\sigma(T) = \sigma(T_1) \cup \sigma(T_3) = \sigma(T_1) \cup \{0\}.$$\par \vskip 0.1 cm \noindent
$(ii)$ By \cite[Lemma 1.21]{AS}, it is well known that the approximate spectrum of $T_1$ lies in unit circle. Hence $0\notin \sigma_{ap}(T_1)$. Consequently, $T_1$ is bounded from below.\par\vskip 0.2 cm \noindent
$(iii)$ The proof  follows from  \cite [ Theorem 2.2]{AS}.
  \end{proof}
\medskip
\noindent Recall that two operators $T\in {\mathcal L}({\mathcal H})$ and  $S\in {\mathcal L}({\mathcal H})$ are similar if there exists an invertible operator   $X\in {\mathcal L}({\mathcal H})$
 such that $XT=SX$ (i.e; $T=X^{-1}SX$  or $S=XTX^{-1}$).
\begin{corollary}\label{cor22} Let $T\in {\mathcal L}({\mathcal H})$ be an $n$-quasi-$m$-isometric operator. If $T_1=T_{/\overline{{\mathcal R}(T^n)}}$ is invertible, then
$T$ is similar to a direct sum of a $m$-isometric operator and a nilpotent operator.
       \end{corollary}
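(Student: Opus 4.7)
The plan is to use the block upper-triangular decomposition provided by Theorem \ref{th21} and then reduce it to block-diagonal form via a Rosenblum/Sylvester type similarity, exploiting the fact that the spectra of the two diagonal blocks are disjoint under the extra hypothesis.

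More precisely, I would first invoke Theorem \ref{th21} to write
\[
T=\begin{pmatrix} T_1 & T_2\\ 0 & T_3\end{pmatrix} \quad \text{on } {\mathcal H}=\overline{{\mathcal R}(T^n)}\oplus {\mathcal N}(T^{*n}),
\]
where $T_1$ is an $m$-isometry and $T_3^n=0$. The aim is to produce an invertible $S\in{\mathcal L}({\mathcal H})$ with $S^{-1}TS=T_1\oplus T_3$. The natural candidate is
\[
S=\begin{pmatrix} I & X\\ 0 & I\end{pmatrix},\qquad S^{-1}=\begin{pmatrix} I & -X\\ 0 & I\end{pmatrix},
\]
with an operator $X:{\mathcal N}(T^{*n})\longrightarrow\overline{{\mathcal R}(T^n)}$ to be determined. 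A direct block multiplication shows that $S^{-1}TS$ is block diagonal if and only if $X$ satisfies the Sylvester equation
\[
T_1X-XT_3=-T_2.
\]

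The key step is to solve this equation, and this is where the invertibility of $T_1$ together with the nilpotency of $T_3$ enters decisively. Since $T_3^n=0$ and $T_1$ is invertible, I propose the explicit (finite) Neumann-type series
\[
X:=-\sum_{0\le k\le n-1} T_1^{-k-1}\,T_2\,T_3^{\,k},
\]
which is manifestly bounded. A short telescoping check (using $T_3^n=0$) gives $T_1X-XT_3=-T_2$, so
\[
S^{-1}TS=\begin{pmatrix} T_1 & 0\\ 0 & T_3\end{pmatrix}=T_1\oplus T_3,
\]
which is the direct sum of an $m$-isometric operator and a nilpotent operator of order $\le n$, as required.

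I do not anticipate a real obstacle: once Theorem \ref{th21} is available, the argument is essentially the classical fact that an upper triangular operator with disjoint spectra on the diagonal (here, $\sigma(T_1)\subset\mathbb{T}$ by Corollary \ref{cor21}(ii) and $\sigma(T_3)=\{0\}$, or simply using that $T_1$ is invertible while $T_3$ is nilpotent) is similar to its diagonal part. The only mild point worth stating carefully is that the finite series defining $X$ converges trivially and yields a bounded operator between the correct subspaces, so $S$ is indeed invertible in ${\mathcal L}({\mathcal H})$.
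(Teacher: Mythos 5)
Your proof is correct and follows essentially the same route as the paper: both pass through the block upper-triangular form from Theorem \ref{th21} and remove the corner $T_2$ by conjugating with $\left(\begin{smallmatrix} I & X\\ 0 & I\end{smallmatrix}\right)$ where $X$ solves the Sylvester equation. The only difference is that the paper obtains $X$ abstractly from Rosenblum's theorem (via $\sigma(T_1)\cap\sigma(T_3)=\emptyset$, citing \cite{Ro}), whereas you write down the explicit finite solution $X=-\sum_{0\le k\le n-1}T_1^{-k-1}T_2T_3^{k}$ using $T_3^n=0$; your version is slightly more self-contained but mathematically equivalent.
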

       \begin{proof}
       By Theorem \ref{th21} we write the matrix representation of $T$ on ${\mathcal H} =\overline{{\mathcal R}(T^n)}\oplus {\mathcal N} (T^{*n})$ as follows $T=\left(
         \begin{array}{ccc}
           T_1 & T_2 \\
          0 & T_3
         \end{array}
       \right)$ where $T_1=T_{/\overline{{\mathcal R}(T^n)}}$ is an $m$-isometric operator and $T_3^n=0$.
       Since $T_1$ is invertible, we have $\sigma(T_1)\cap \sigma(T_3) = \emptyset$. Then there exists an operator $A$ such that $T_1A-AT_3 = T_2$ by \cite{Ro}. Hence
$$T=\left(
         \begin{array}{ccc}
           T_1 & T_2 \\
          0 & T_3
         \end{array}
       \right)=\left(
         \begin{array}{ccc}
           I & A \\
          0 & I
         \end{array}
       \right)^{-1}\left(
         \begin{array}{ccc}
           T_1 & 0 \\
          0 & T_3
         \end{array}
       \right)\left(
         \begin{array}{ccc}
           I & A \\
          0 & I
         \end{array}
       \right).$$  The desired result follows from Theorem \ref{th21}.
       \end{proof}
       \par \vskip 0.2 cm \noindent
       \par \vskip 0.2 cm \noindent Clearly that every $n$-quasi-$m$-isometric operator is an $(n+1)$-quasi-$m$-isometric operator. In \cite[Theorem 2.4]{SP}, the authors S. Mechri and S. M. Patel proved that if $T$ is a quasi-$2$-isometry, then $T$ is quasi-$m$-isometry for all $m \geq 2.$
       In the following corollary, we give a generalization that
 every $n$-quasi-$m$-isometric operator is an $n$-quasi-$k$-isometric operator for $k\geq m$.
\begin{corollary}\label{cor23}
Let $T\in {\mathcal L}({\mathcal H})$. If $T$ is an $n$-quasi-$m$-isometric operator, then $T$ is an $n$-quasi-$k$-isometric operator for every positive integer $k\geq m$.
\end{corollary}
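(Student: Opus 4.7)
The plan is to reduce the statement to the corresponding, essentially classical, fact for ordinary $m$-isometries: namely that an $m$-isometric operator is automatically $k$-isometric for every $k \geq m$. Once this auxiliary fact is in hand, the block-triangular description of Theorem \ref{th21} transports it immediately to the $n$-quasi setting.

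First I would establish, for an arbitrary $S \in \mathcal{L}(\mathcal{H})$, the recurrence
\begin{equation*}
\beta_{k+1}(S) \;=\; S^{\ast}\beta_k(S) S \;-\; \beta_k(S),
\end{equation*}
which is a short combinatorial computation: expanding both terms on the right as sums of $(-1)^{k-j}\binom{k}{j} S^{\ast j} S^j$, reindexing the first term by $j\mapsto j-1$, and combining the two via Pascal's identity $\binom{k}{j-1} + \binom{k}{j} = \binom{k+1}{j}$ gives exactly $\beta_{k+1}(S)$. By induction on $k$, this recurrence shows that $\beta_m(S) = 0$ forces $\beta_k(S) = 0$ for all $k \geq m$; that is, every $m$-isometry is $k$-isometric for $k \geq m$.

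Next I would invoke Theorem \ref{th21}. Assume first that $\overline{\mathcal{R}(T^n)} \neq \mathcal{H}$. Theorem \ref{th21} then provides the decomposition
\begin{equation*}
T = \begin{pmatrix} T_1 & T_2 \\ 0 & T_3 \end{pmatrix} \quad \text{on} \quad \overline{\mathcal{R}(T^n)} \oplus \mathcal{N}(T^{\ast n}),
\end{equation*}
with $T_1$ an $m$-isometry and $T_3^n = 0$. By the auxiliary fact, $T_1$ is also a $k$-isometry, while the nilpotency condition $T_3^n = 0$ is unchanged. Applying the converse direction of Theorem \ref{th21} with $k$ in place of $m$ yields at once that $T$ is an $n$-quasi-$k$-isometric operator.

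The edge case $\overline{\mathcal{R}(T^n)} = \mathcal{H}$ is handled separately but trivially: Lemma \ref{lem21} then says that $T$ is $m$-isometric on the whole space, so the same recurrence upgrades this to $k$-isometric, and every $k$-isometry is $a$ fortiori $n$-quasi-$k$-isometric (just multiply the identity $\beta_k(T) = 0$ on the left by $T^{\ast n}$ and on the right by $T^n$). I do not anticipate any genuine obstacle; the only substantive step is verifying the combinatorial identity for $\beta_{k+1}$, and the structural work has already been done in Theorem \ref{th21}.
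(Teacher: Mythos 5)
Your proposal is correct and follows essentially the same route as the paper: split on whether $\overline{\mathcal{R}(T^n)}$ is dense, and otherwise use the block-triangular form of Theorem \ref{th21} to reduce to the fact that an $m$-isometry is a $k$-isometry for $k\geq m$. The only difference is that the paper dismisses that last fact as ``obvious,'' whereas you supply the short proof via the recurrence $\beta_{k+1}(S)=S^{\ast}\beta_k(S)S-\beta_k(S)$, which is a welcome addition but not a change of method.
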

\begin{proof}
If ${\mathcal R}(T^n)$ is dense, then $T$ is an $m$-isometric operator. Hence $T$ is a $k$-isometric operator for every positive integer $k\geq m$.\par \vskip 0.2 cm \noindent If ${\mathcal R}(T^n)$ is not dense, by Theorem \ref{th21} we write the matrix representation of T on \\${\mathcal H} =\overline{{\mathcal R}(T^n)}\oplus {\mathcal N} (T^{*n})$ as follows $T=\left(
         \begin{array}{ccc}
           T_1 & T_2 \\
          0 & T_3
         \end{array}
       \right)$ where $T_1=T_{/\overline{{\mathcal R}(T^n)}}$ is an $m$-isometric operator and $T_3^n=0$. Obviously, $T_1$ is a $k$-isometric operator for every integer $k\geq m$. The conclusion follows from the statement 2. of Theorem \ref{th21}.

\end{proof}
%\end{document}
\par \vskip 0.02 cm \noindent
We consider the following example of $n$-quasi-$m$-isometry map, which is not a quasi-$m$-isometry.\par \vskip 0.2 cm \noindent
\begin{example}
Let $\big({e_k}\big)_{k\in {\mathbb{ N}}}$
 be an orthonormal basis of  ${\mathcal H}$. Define $T\in {\mathcal L}({\mathcal H})$ as follows
 $$Te_1=2e_2,\;Te_2=3e_3\;\;\;\;\text{and}\;\;\;T_k=e_{k+1}\;\;\;\;\text{for}\;\;k\geq 3.$$ Then by a straightforward calculation,
one can show that $T$ is a $2$-quasi-$2$-isometry but it is not a quasi-$2$-isometry.
\end{example}
\par \vskip 0.2 cm \noindent
In the following theorem, we  give a sufficient condition on an
$n$-quasi-$m$-isometric operator for $n\geq 2$ to be a quasi-$m$-isometric operator.

\begin{theorem}\label{2.2}
Let $T\in \mathcal{L}(\mathcal{H})$ be an $n$-quasi-$m$-isometry  for $n\geq 2$. If $\mathcal{N}(T^*)=\mathcal{N}(T^{*2})$, then $T$ is a quasi-$m$-isometry.
\end{theorem}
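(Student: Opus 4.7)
The plan is to reduce the general $n$-quasi-$m$-isometry identity to the $n=1$ case by exploiting the fact that the hypothesis $\mathcal{N}(T^*)=\mathcal{N}(T^{*2})$ forces all higher kernels of $T^*$ to collapse onto $\mathcal{N}(T^*)$. My first step would be to prove, by induction on $k$, that $\mathcal{N}(T^*)=\mathcal{N}(T^{*k})$ for every integer $k\geq 1$. The cases $k=1,2$ are immediate; for the inductive step, from $T^{*(k+1)}x=0$ I would write $T^{*k}(T^*x)=0$, apply the induction hypothesis to get $T^{*2}x=0$, and then invoke the standing hypothesis one more time to reach $T^*x=0$. Taking orthogonal complements then gives $\overline{\mathcal{R}(T^k)}=\overline{\mathcal{R}(T)}$ for every $k\geq 1$; in particular $\mathcal{N}(T^{*n})=\mathcal{N}(T^*)$ and $\overline{\mathcal{R}(T^n)}=\overline{\mathcal{R}(T)}$, since $n\geq 2$.

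Armed with these two equalities, I would translate the defining relation $T^{*n}\beta_m(T)T^n=0$ as follows: for every $x\in\mathcal{H}$, the vector $\beta_m(T)T^n x$ lies in $\mathcal{N}(T^{*n})=\mathcal{N}(T^*)$, which is precisely $T^*\beta_m(T)T^n=0$. Rewriting this inclusion yields $\mathcal{R}(T^n)\subseteq\mathcal{N}\bigl(T^*\beta_m(T)\bigr)$. Because $T^*\beta_m(T)$ is bounded, its kernel is closed, so it in fact contains $\overline{\mathcal{R}(T^n)}=\overline{\mathcal{R}(T)}$. In particular, $Tx\in\overline{\mathcal{R}(T)}\subseteq\mathcal{N}\bigl(T^*\beta_m(T)\bigr)$ for every $x\in\mathcal{H}$, which is exactly the quasi-$m$-isometry condition $T^*\beta_m(T)T=0$.

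I do not expect a serious obstacle here: the only nontrivial ingredient is the initial induction, and everything else is a routine closure argument. As an alternative presentation, one could instead invoke Theorem~\ref{th21}: the decomposition $\mathcal{H}=\overline{\mathcal{R}(T^n)}\oplus\mathcal{N}(T^{*n})$ collapses to $\mathcal{H}=\overline{\mathcal{R}(T)}\oplus\mathcal{N}(T^*)$, and the lower-right corner $T_3$, being the compression to $\mathcal{N}(T^*)$ of $Ty\in\mathcal{R}(T)\subseteq\mathcal{N}(T^*)^\perp$, must already equal $0$ (not merely $T_3^n=0$); applying the converse direction of Theorem~\ref{th21} with $n$ replaced by $1$ then yields that $T$ is a quasi-$m$-isometry.
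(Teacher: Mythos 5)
Your argument is correct and follows essentially the same route as the paper, which in fact gives two proofs: your main argument (collapse $\mathcal{N}(T^{*n})$ to $\mathcal{N}(T^*)$, strip the left factor, then transfer the identity from $T^n$ to $T$ on the right) matches the paper's first proof, differing only in that you finish via closedness of $\mathcal{N}\bigl(T^*\beta_m(T)\bigr)$ and $\overline{\mathcal{R}(T^n)}=\overline{\mathcal{R}(T)}$ where the paper takes adjoints and applies the kernel equality a second time. Your ``alternative presentation'' is likewise the paper's second proof (restriction of $T$ to $\overline{\mathcal{R}(T^n)}=\overline{\mathcal{R}(T)}$) in matrix form, and your explicit induction supplies the step $\mathcal{N}(T^*)=\mathcal{N}(T^{*n})$ that the paper asserts without proof.
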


\begin{proof} Two different proofs will be given.\par \vskip 0.2 cm \noindent
 First proof. Under the assumption  $\mathcal{N}(T^*)=\mathcal{N}(T^{*2})$, it follows  that
$\mathcal{N}(T^*)=\mathcal{N}(T^{*n})$.
Since $T$ is an $n$-quasi-$m$-isometry, we have \begin{eqnarray*} T^{*n}\Bigg(\sum_{0\leq k \leq
m}(-1)^{m-k}\binom{m}{k}T^{*k}T^{k} \Bigg)T^n=0,
\end{eqnarray*}
we deduce that  \begin{eqnarray*} T^*\Bigg( \sum_{0\leq k \leq
m}(-1)^{m-k}\binom{m}{k}T^{*k}T^{k}\Bigg)T^n=0.
\end{eqnarray*}
This means that
\begin{eqnarray*} T^{*n}\Bigg( \sum_{0\leq k \leq
m}(-1)^{m-k}\binom{m}{k}\bigg(T^{*k}T^{k}\bigg)^*\Bigg)T=0.
\end{eqnarray*}
Using again the condition ${\mathcal N}(T^*)={\mathcal N}(T^{*n})$,  we obtain
\begin{eqnarray*} T^{*}\Bigg( \sum_{0\leq k \leq
m}(-1)^{m-k}\binom{m}{k}\bigg(T^{*k}T^{k}\bigg)^*\Bigg)T=0.
\end{eqnarray*}
Thus we have
\begin{eqnarray*} T^{*}\Bigg( \sum_{0\leq k \leq
m}(-1)^{m-k}\binom{m}{k} T^{*k}T^{k} \Bigg)T=0.
\end{eqnarray*}
Hence $T$ is a quasi-$m$-isometric operator.\par \vskip 0.2 cm \noindent
 Second proof. By the assumption that ${\mathcal N}(T^*)={\mathcal N}(T^{*2})$, we have
${\mathcal N}(T^*)={\mathcal N}(T^{*n})$ and therefore $\overline{{\mathcal R}(T)}=\overline{{\mathcal R}(T^n)}$. Now  Since $T$ is an $n$-quasi-$m$-isometry, it follows in view of Lemma 2.1 that $T$ is an $m$-isometry on $\overline{{\mathcal R}(T^n)}=\overline{{\mathcal R}(T)}$. This means that $T$ is an  $m$-isometry on $\overline{{\mathcal R}(T)}$. Again by applying Lemma \ref{lem21}, we obtain that $T$ is an quasi-$m$-isometry as required.
\end{proof}
\par \vskip 0.2 cm \noindent \begin{remark}
The following example shows that Theorem \ref{2.2} is not necessarily true if ${\mathcal N}(T^*)\not={\mathcal N}(T^{*2})$.\end{remark}
\begin{example}
Consider the  operator $T=\left(
                              \begin{array}{cc}
                                0 & 1 \\
                                0 & 0 \\
                              \end{array}
                            \right)$
 acting on the two dimensional Hilbert space $\mathbb{C}^2$. Then by a straightforward calculation,
one can show that $T$ is a $2$-quasi-isometry but it is not a quasi-isometry. However ${\mathcal N}(T^*)\not={\mathcal N}(T^{*2})$
\end{example}
\par \vskip 0.2 cm \noindent
Patel \cite[ Theorem 2.1]{MP}, proved that any power of a $(2, 2)$-isometry is again a
$(2,2)$-isometry. T. Berm\'{u}ndez et al. \cite[Theorem 3.1]{BMM}  proved that any power of $(m, p)$-isometry is an $(m, p)$-isometry.
 Later S. Mechri and S. M. Patel \cite{SP} gave a partial generalization to quasi-$2$-isometry.
The following theorem shows that any power of an $n$-quasi-$m$-isometry
is an $n$-quasi-$m$-isometry.\par\vskip 0.2 cm \noindent
\begin{theorem}\label{2.3}
Let $T \in {\mathcal{L}}({\mathcal H})$. Suppose $T$  is an $n$-quasi-$m$-isometric operator. Then $T^k$ is $n$-quasi-$m$-isometric operator for any $k\in\mathbb{N}$.
\end{theorem}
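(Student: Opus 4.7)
The most efficient route is via the polynomial characterization recorded in Proposition \ref{prop21}. Since $T$ is $n$-quasi-$m$-isometric, for each $x \in \mathcal{H}$ there is a polynomial $P_x$ of degree at most $m-1$ with $\|T^j x\|^2 = P_x(j)$ for every $j \geq n$. The plan is to observe that a linear reparametrisation of $P_x$ produces the polynomial required to certify that $T^k$ is also $n$-quasi-$m$-isometric.

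Concretely, the case $k = 0$ reduces to the trivial observation that $I$ is $n$-quasi-$m$-isometric, so I assume $k \geq 1$. I would compute
$$\Delta_{m,n}(T^k, x) = \sum_{0 \leq j \leq m} (-1)^{m-j} \binom{m}{j} \|T^{k(j+n)} x\|^2.$$
Because $k \geq 1$ we have $k(j+n) \geq n$ for every $j \geq 0$, so each norm equals $P_x\bigl(k(j+n)\bigr)$. The map $j \mapsto P_x\bigl(k(j+n)\bigr)$ is itself a polynomial in $j$ of degree at most $m-1$, and so the easy direction of Lemma \ref{lem22} (with auxiliary index $0$) forces the alternating sum to vanish. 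Thus $\Delta_{m,n}(T^k, x) = 0$ for every $x$, which is exactly the condition recorded immediately after \eqref{2.3} that $T^k$ be $n$-quasi-$m$-isometric.

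An alternative plan, more in keeping with the matrix-theoretic spirit of this section, is to apply Theorem \ref{th21}: writing
$$T = \begin{pmatrix} T_1 & T_2 \\ 0 & T_3 \end{pmatrix}$$
on $\overline{\mathcal{R}(T^n)} \oplus \mathcal{N}(T^{*n})$, the power $T^k$ inherits an upper-triangular form with $(1,1)$-block $T_1^k$, which is $m$-isometric by \cite[Theorem 3.1]{BMM}, and $(2,2)$-block satisfying $(T_3^k)^n = T_3^{kn} = 0$. One then replays the block computation from the $(2) \Rightarrow (1)$ direction of Theorem \ref{th21}. I do not foresee a genuine obstacle for either path; the only mild subtlety in the matrix version is that the orthogonal decomposition used is the one canonical for $T$ rather than for $T^k$, but the sufficiency calculation in Theorem \ref{th21} is purely algebraic and goes through for any invariant orthogonal decomposition of the required form.
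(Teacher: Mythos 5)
Your proposal is correct. The paper in fact gives two proofs of this theorem, and your ``alternative plan'' is essentially its second one: write $T$ as an upper-triangular matrix on $\overline{{\mathcal R}(T^n)}\oplus{\mathcal N}(T^{*n})$ via Theorem \ref{th21}, note that $T_1^k$ is an $m$-isometry by \cite[Theorem 3.1]{BMM} and $(T_3^k)^n=T_3^{kn}=0$, and rerun the sufficiency computation; your observation that this computation is purely algebraic and does not require the decomposition to be the canonical one for $T^k$ is precisely the point that makes it legitimate. Your primary route is genuinely different from both of the paper's arguments. The paper's first proof uses Lemma \ref{lem21} to view $T$ as an $m$-isometry on $\overline{{\mathcal R}(T^n)}$, cites \cite[Theorem 3.1]{BMM} to conclude the same for $T^k$, and then transfers the conclusion to $\overline{{\mathcal R}(T^{kn})}$ via the inclusion $\overline{{\mathcal R}(T^{kn})}\subset\overline{{\mathcal R}(T^n)}$. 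You instead take the polynomial characterization of Proposition \ref{prop21}, note that $k(j+n)\geq n$ for $k\geq 1$, so each term $\|T^{k(j+n)}x\|^2$ equals $P_x\bigl(k(j+n)\bigr)$, and observe that $j\mapsto P_x\bigl(k(j+n)\bigr)$ is again a polynomial of degree at most $m-1$, whose $m$-th finite difference vanishes; this kills $\Delta_{m,n}(T^k,x)$ directly. What this buys is a self-contained argument that reproves, rather than imports, the stability of $m$-isometries under powers, at the cost of relying on Proposition \ref{prop21} (whose statement in the paper is somewhat garbled, though its content --- that $\|T^jx\|^2$ agrees with a polynomial of degree at most $m-1$ for $j\geq n$ --- is exactly what you use, and it does follow from Lemma \ref{lem22} applied to $x$ replaced by $T^jx$). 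Your handling of the trivial case $k=0$ is also fine.
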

\begin{proof}
\par \vskip 0.1 cm \noindent Two different proofs of this statement will be given.\par \vskip 0.2 cm \noindent
 First proof. Suppose that $T$ is $n$-quasi-$m$-isometric operator. By the statement in Lemma \ref{2.1}, $T$ is $m$-isometric on $\overline{{\mathcal R}(T^n)}$. Therefore, in view of
\cite[Theorem 3.1]{BMM}, the operator $T^k$ is an $m$-isometric on   $\overline{{\mathcal R}(T^n)}$.  Thus
$$<\beta_m(T^k) x\,| \, x> \;= 0, \; \; \forall x\in   \overline{{\mathcal R}(T^n)}.$$
Using the inclusion   $$\overline{{\mathcal R}((T^k)^n)} \subset  \overline{{\mathcal R}(T^n)},$$
we get
$$ <\beta_m(T^k) x\,| \, x>\; = 0, \; \; \forall x\in \overline{{\mathcal R}((T^k)^n)}.$$
Hence $T^k$ is a $m$-isometric  on $\overline{{\mathcal R}((T^k)^n)}$. This shows, by the statement of Lemma \ref{2.1}, that
$T^k$ is $n$-quasi-$m$-isometric.\par \vskip 0.2 cm \noindent Second proof. If ${\mathcal R}(T^n)$ is dense, then $T$ is an $m$-isometric operator and hence $T^k$ is an $m$-isometric operator (by \cite[Theorem 3.1]{BMM}). If ${\mathcal R}(T^n)$ is not dense, by Theorem \ref{2.1} we write the matrix representation of $T$ on ${\mathcal H} =\overline{{\mathcal R}(T^n)} \oplus {\mathcal N}(T^{*n})$  as follows $T=\left(
         \begin{array}{ccc}
           T_1 & T_2 \\
          0 & T_3
         \end{array}
       \right)$ where $T_1 = T_{| \overline{{\mathcal R}(T ^n)}}$ is an $m$-isometric  and $ T_3 ^n=0.$ We notice that $$T^k=\left(
         \begin{array}{ccc}
           T_1^k & \displaystyle\sum_{ j=0}^{ k-1}T_1^jT_2T_3^{k-1-j} \\
          0 & T_3^k
         \end{array}
       \right),$$ where $T_1^k$ is an $m$-isometric  and $(T_3^k)^n=0.$  Hence $T^k$ is an $n$-quasi-$m$-isometric operator by Theorem \ref{2.1}.
\end{proof}
        \par\vskip 0.2 cm \noindent
       \begin{remark}
       The converse of Theorem \ref{2.3} in not true in general as it shows in the following example.
       \end{remark}
       \vspace{-4mm}
       \begin{example} It is not difficult to prove that the operator  $ T :=\left(
                                                                 \begin{array}{cc}
                                                                   -1 & -1 \\
                                                                   3& 2 \\
                                                                 \end{array}
                                                               \right)$
defined in $\mathbb{C}^2$
with the euclidean norm satisfies  $T^3$ is a quasi-$3$-isometry but $T$ is not a quasi-$3$-isometry.

       \end{example}

       The following theorem generalize \cite[Theorem 3.6]{BMM}.
\begin{theorem}\label{2.4}
Let $T\in {\mathcal L}({\mathcal H})$ and $r, s, m, n, l$ be
positive integers. If $T^r$ is an $n$-quasi-$m$-isometry and
$T^s$ is an $n$-quasi-$l$-isometry, then $T^q$ is an  $n$-quasi-$p$-isometry, where $q$ is the greatest
common divisor of $r$ and $s$, and $p$ is the minimum of  $m$ and  $l$.
\end{theorem}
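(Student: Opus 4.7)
The plan is to recast both hypotheses via the polynomial characterization (Proposition \ref{prop21} together with Lemma \ref{lem22}), match two polynomials on their common arithmetic sub-progression, and then bridge to the gcd-progression, mimicking the proof of \cite[Theorem 3.6]{BMM}.

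Step one: fix $x\in{\mathcal H}$ and set $\phi(t):=\|T^t x\|^2$. Applying Proposition \ref{prop21} to $T^r$ produces a polynomial $Q_r$ in the variable $t$, of degree at most $m-1$ and depending on $x$, with $\phi(t)=Q_r(t)$ for every $t$ in the arithmetic progression ${\mathcal A}_r:=\{rn,\,r(n+1),\,r(n+2),\dots\}$. The symmetric argument with $T^s$ yields a polynomial $Q_s$ of degree at most $l-1$ such that $\phi(t)=Q_s(t)$ on ${\mathcal A}_s:=\{sn,\,s(n+1),\,s(n+2),\dots\}$.

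Step two: the intersection ${\mathcal A}_r\cap{\mathcal A}_s$ contains the infinite progression $\{\operatorname{lcm}(r,s)\cdot k : k\geq n\}$, since $q$ divides both $r$ and $s$. On that intersection $Q_r$ and $Q_s$ both represent $\phi$ and therefore agree at infinitely many points, so they are identically equal as polynomials. Write $Q$ for this common polynomial; then $\deg Q \leq \min(m-1,l-1)=p-1$, and $\phi(t)=Q(t)$ holds throughout ${\mathcal A}_r\cup{\mathcal A}_s$.

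Step three: by Proposition \ref{prop21} applied this time to $T^q$, the conclusion that $T^q$ is an $n$-quasi-$p$-isometry is equivalent to saying that $\phi(t)$ coincides with a polynomial of degree at most $p-1$ on the full progression ${\mathcal A}_q:=\{qn,\,q(n+1),\,q(n+2),\dots\}$. Since $Q$ is the natural candidate, the remaining task is to verify that $\phi(t)=Q(t)$ at every point of ${\mathcal A}_q\setminus({\mathcal A}_r\cup{\mathcal A}_s)$. This is the real obstacle, because ${\mathcal A}_r\cup{\mathcal A}_s$ is in general strictly smaller than ${\mathcal A}_q$. The plan to close it is to reduce, by the Euclidean algorithm on $(r,s)$, to the case $\gcd(r/q,s/q)=1$, and then use a Bezout representation $q=\alpha r-\beta s$ with $\alpha,\beta$ positive integers, combined with iterated applications of Theorem \ref{2.3} (each power of $T^r$ and of $T^s$ inherits the respective quasi-isometry property, hence its own polynomial identity of the type obtained in Step one). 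These extra polynomial identities, taken together, are enough to pin down the values of $\phi$ at the missing points of ${\mathcal A}_q$ and force them to lie on the curve $t\mapsto Q(t)$. Everything before this bridging step is routine linear-algebraic polynomial interpolation; the technical heart of the argument is this last step, and it is where the quasi-isometry structure interacts with the number-theoretic information supplied by $q=\gcd(r,s)$.
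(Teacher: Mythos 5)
There is a genuine gap, and it sits exactly where you locate it: Step three is not a routine bridging step but the entire content of the theorem, and the mechanism you propose for closing it cannot work. Every identity you can extract from powers of $T^r$ and $T^s$ via Theorem \ref{2.3} involves only indices lying in $r\mathbb{Z}_{\geq 0}$ or $s\mathbb{Z}_{\geq 0}$ (the relation for $(T^r)^a$ applied to $x$ constrains $\|T^{ra(k+n)}x\|^2$, all of whose exponents are multiples of $r$), so no combination of them, Bezout or otherwise, says anything about $\phi$ at a point of ${\mathcal A}_q$ that is not a multiple of $r$ or of $s$. That the information gathered in Steps one and two is genuinely insufficient is shown by a sequence-level counterexample: take $r=2$, $s=3$, $q=1$, $m=l=1$; a sequence $(a_j)$ that is constant on even indices and constant on multiples of $3$ is completely unconstrained at $j\equiv 1,5 \pmod 6$, so it need not be constant. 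The ingredient your argument discards is the family of \emph{shifted} identities obtained by applying the defining relation for $T^r$ to the vectors $T^jx$ for all $j\geq 0$, which says that $k\mapsto\|T^{r(k+n)+j}x\|^2$ is an arithmetic progression of order at most $m-1$ for every shift $j$; only with these shifts can one reach the missing residues, and even then the combination of the two families into a statement about the gcd-progression is a nontrivial combinatorial lemma (\cite[Theorem 4.1]{TBMN}, quoted as Lemma \ref{3.1} in Section 3 of the paper), not an interpolation exercise.

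Your route is also quite different from the paper's, which sidesteps all of this arithmetic. The paper writes $T=\left(\begin{smallmatrix} T_1 & T_2 \\ 0 & T_3\end{smallmatrix}\right)$ on ${\mathcal H}=\overline{{\mathcal R}(T^n)}\oplus{\mathcal N}(T^{*n})$, shows that the hypotheses force $T_1^r$ to be an $m$-isometry, $T_1^s$ an $l$-isometry, and $(T_3^q)^n=0$, then invokes \cite[Theorem 3.6]{BMM} as a black box to conclude that $T_1^q$ is a $p$-isometry, and finishes with Theorem \ref{th21}. If you want to pursue your sequence-based approach, the honest version is the proof of Theorem \ref{3.2} later in the paper: record the shifted recurrences for all $j\geq 0$ and cite the arithmetic-progression lemma; otherwise, reduce to the block-diagonal corner and cite \cite{BMM} directly. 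As written, your proof does not establish the theorem.
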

\begin{proof} Consider the matrix representation of T with respect to the decomposition \\$ {\mathcal H}=\overline{{\mathcal R}(T^n)}\oplus {\mathcal N}(T^{*n})$
  as follows  $T=\left(
         \begin{array}{ccc}
           T_1 & T_2 \\
          0 & T_3
         \end{array}
       \right)$   where $T_1=T_{/ \overline{{\mathcal R}(T^n)}}$. \par \vskip 0.2 cm \noindent We have $ T^r= \left(
      \begin{array}{cc}
        T_1 ^r& \displaystyle\sum_{ j=0}^{ r-1}T_1^jT_2T_3^{r-1-j} \\
        0 & T_3^r \\
      \end{array}
    \right)$. Since $T^r$ is an $n$-quasi-$m$-isometry, we need to prove that $T_1^r$ is an $m$-isometry and $\big(T_3^r\big)^n=0$.\par \vskip 0.2 cm \noindent
    In fact, Let $P=P_{\overline{{\mathcal R}(T^n)}}$ be the projection onto $\overline{{\mathcal R}(T^n)}$. Then $$\left(
      \begin{array}{cc}
        T_1^r & 0 \\
        0 & 0\\
      \end{array}
    \right)=T^rP=PT^rP.$$\par \vskip 0.2 cm \noindent  Since $T^r$ is an $n$-quasi-$m$-isometry, then we have
    $$P\bigg(\sum_{0\leq k\leq m}(-1)^{m-k}\binom{m}{k} \big(T^r\big)^{\ast k}\big(T^r\big)^k\bigg)P= 0.$$
    That is
    $$\sum_{0\leq k\leq m}(-1)^{m-k}\binom{m}{k} \big(T_1^r\big)^{\ast k}\big(T_1^r\big)^k= 0.$$ Hence, $T_1$ is an $m$-isometry.
\par \vskip 0.1 cm \noindent
On the other hand, let $x=x_1+x_2\in  {\mathcal H}=\overline{{\mathcal R}(T^n)}\oplus  {\mathcal N}(T^{*n}).$  A
simple computation shows that
\begin{eqnarray*}
\langle \big(T_3^r\big)^{n}x_2,x_2\rangle &=& \langle \big(T^r\big)^{n}(I-P)x,(I-P)x \rangle\\
&=&\langle (I-P)x, \big(T^r\big)^{\ast n}(I-P)x \rangle=0.
\end{eqnarray*}
So, $\big(T_3^r\big)^n=0.$\par \vskip 0.2 cm \noindent Analogously, as $ T^s= \left(
      \begin{array}{cc}
        T_1 ^s& \displaystyle\sum_{ j=0}^{ s-1}T_1^jT_2T_3^{s-1-j} \\
        0 & T_3^s \\
      \end{array}
    \right)$  is an $n$-quasi-$l$-isometry by similar arguments we can conclude that $T_1^s$ is an $l$-isometry and $\big(T_3^s\big)^n=0.$ \par \vskip 0.2 cm \noindent Now, we  obtained that $T_1^r$ is an $m$-isometry and $T_1^s$ is an $l$-isometry. By \cite[Theorem 3.6]{BMM}, it follows that $T_1^q$ is a $p=\min(m,l)$-isometry. Moreover we have $\big(T_3^q\big)^n=0$.\par \vskip 0.2 cm \noindent Consequently, $ T^q= \left(
    \begin{array}{cc}
        T_1 ^q& \displaystyle\sum_{ j=0}^{ q-1}T_1^jT_2T_3^{q-1-j} \\
        0 & T_3^q \\
      \end{array}
    \right)$  is an $n$-quasi-$p$-isometry by Theorem \ref{2.1}. The proof is completed.

\end{proof}

\par \vskip 0.2 cm \noindent
The following corollary shows that
if we assume that two suitable different powers of $T$ are $n$-quasi-$m$-isometries,
then we obtain that $T$ is a $n$-quasi-$m$-isometry.
\par\vskip 0.2 cm \noindent
\begin{corollary}\label{2.4}
Let $T \in {\mathcal L}({\mathcal H})$ and  $r, s,m,n,l$ be
positive integers. The following properties hold. \par \vskip 0.2 cm \noindent $(1)$
If $T$ is an $n$-quasi-$m$-isometry such that $T^s$ is an $n$-quasi-isometry, then $T$ is

 an $n$-quasi-isometry.\par \vskip 0.2 cm \noindent $(2)$
 If $T^r$ and $T^{r+1}$ are $n$-quasi-$m$-isometries, then so is $T$.\par \vskip 0.2 cm \noindent $(3)$
 If $T^r$ is an $n$-quasi-$m$-isometry and $T^{r+1}$ is an $n$-quasi-$l$-isometry with
$m < l$,

 then $T$ is an $n$-quasi-$m$-isometry.
\end{corollary}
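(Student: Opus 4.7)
The plan is to derive all three parts as direct instances of Theorem \ref{2.4}, since the corollary is essentially a collection of special cases where the ``greatest common divisor'' part of that theorem collapses to $1$.

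For part $(1)$, I would view $T = T^1$ as an $n$-quasi-$m$-isometry and $T^s$ as an $n$-quasi-$1$-isometry (this is exactly what ``$n$-quasi-isometry'' means). Applying Theorem \ref{2.4} with the powers $1$ and $s$ yields $T^{\gcd(1,s)} = T^1$ is an $n$-quasi-$\min(m,1)$-isometry, i.e., an $n$-quasi-isometry.

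For parts $(2)$ and $(3)$, the key observation is the number-theoretic identity $\gcd(r, r+1) = 1$, which makes the exponent $q$ in Theorem \ref{2.4} equal to $1$ and therefore produces a conclusion about $T$ itself. In $(2)$, both $T^r$ and $T^{r+1}$ are $n$-quasi-$m$-isometries, so Theorem \ref{2.4} with $\min(m,m)=m$ gives that $T$ is an $n$-quasi-$m$-isometry. In $(3)$, $T^r$ is an $n$-quasi-$m$-isometry and $T^{r+1}$ is an $n$-quasi-$l$-isometry; since $m < l$, we have $\min(m,l) = m$, so Theorem \ref{2.4} again produces that $T$ is an $n$-quasi-$m$-isometry.

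There is no real obstacle here: the entire content of the corollary reduces to choosing $r, s$ in Theorem \ref{2.4} so that $\gcd(r,s) = 1$, and then reading off which of the two ``$m$, $l$'' parameters is the smaller. The only thing worth writing carefully is the statement in $(1)$, where one must recognize that ``$n$-quasi-isometry'' is the $m=1$ case of ``$n$-quasi-$m$-isometry'' so that Theorem \ref{2.4} applies with one of the two exponents equal to $1$.
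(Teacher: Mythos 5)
Your proposal is correct and follows exactly the paper's route: the paper's own proof is simply ``an immediate consequence of Theorem 2.4,'' and you have correctly filled in the details by taking $\gcd(1,s)=\gcd(r,r+1)=1$ together with the appropriate minimum of the isometry orders (including the identification of ``$n$-quasi-isometry'' with ``$n$-quasi-$1$-isometry'' in part (1)).
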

\begin{proof}

 The proof is an immediate consequence of Theorem \ref{2.4}.
\end{proof}
\medskip
Recall that an operator $T\in {\mathcal L}({\mathcal H})$ is said to be power bounded, if $\displaystyle\sup_{k}\|T^k\|< \infty$  or equivalently, there exists $C >  0$  such that for every $k$ and every $\xi \in {\mathcal H},$ one has
	$$\|T^k\xi\| \leq  C\|\xi\|.$$
In \cite[Theorem 2]{COT}, it was proved that every power bounded $m$-isometry operator is an isometry. The following theorem extend this result to $n$-quasi-$m$-isometry.

\begin{theorem}\label{2.5}

If $T\in {\mathcal L}({\mathcal H})$ is an $n$-quasi-$m$-isometric operator which is power bounded, Then $T$ is an $n$-quasi-isometry.
\end{theorem}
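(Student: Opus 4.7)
The plan is to lift the cited result for $m$-isometries (a power bounded $m$-isometry is an isometry) to the $n$-quasi setting via the matrix decomposition given by Theorem \ref{th21}. Namely, I would split into two cases according to whether $\overline{\mathcal{R}(T^n)}$ is all of $\mathcal{H}$ or a proper subspace.

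In the dense case $\overline{\mathcal{R}(T^n)} = \mathcal{H}$, Lemma \ref{lem21} tells us that $T$ is itself an $m$-isometry on $\mathcal{H}$. Since $T$ is power bounded, \cite[Theorem 2]{COT} gives that $T$ is an isometry, so $T^{\ast}T - I = 0$, from which $T^{\ast n}(T^{\ast}T - I)T^n = 0$ is immediate, i.e., $T$ is an $n$-quasi-isometry.

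In the non-dense case, I would invoke Theorem \ref{th21} to write
\[
T = \begin{pmatrix} T_1 & T_2 \\ 0 & T_3 \end{pmatrix} \quad \text{on} \quad \mathcal{H} = \overline{\mathcal{R}(T^n)} \oplus \mathcal{N}(T^{\ast n}),
\]
with $T_1$ an $m$-isometry and $T_3^n = 0$. The key step is to show $T_1$ is power bounded. Since $\overline{\mathcal{R}(T^n)}$ is $T$-invariant, we have $T_1^k = T^k|_{\overline{\mathcal{R}(T^n)}}$, so $\|T_1^k x\| = \|T^k x\| \leq C\|x\|$ for every $x \in \overline{\mathcal{R}(T^n)}$, where $C$ is the power bound of $T$. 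Hence $T_1$ is a power bounded $m$-isometry, and \cite[Theorem 2]{COT} upgrades it to an isometry, i.e., a $1$-isometry.

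Finally, I would feed this back into Theorem \ref{th21}, this time with the parameter $m$ replaced by $1$: since $T_1$ is a $1$-isometry and $T_3^n = 0$, the implication $(2)\Rightarrow(1)$ of that theorem gives that $T$ is an $n$-quasi-$1$-isometry, which is exactly an $n$-quasi-isometry. I do not expect any serious obstacle; the only point deserving care is the identification $T_1^k = T^k|_{\overline{\mathcal{R}(T^n)}}$, which uses the $T$-invariance of $\overline{\mathcal{R}(T^n)}$ (guaranteed by the block-triangular form of $T$ in the decomposition).
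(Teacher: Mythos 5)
Your proposal is correct and follows essentially the same route as the paper: the same two-case split on whether $\overline{\mathcal{R}(T^n)}$ is dense, the same use of \cite[Theorem 2]{COT} to upgrade the power bounded $m$-isometry $T_1$ to an isometry, and the same appeal to the implication $(2)\Rightarrow(1)$ of Theorem \ref{th21} to conclude. Your explicit justification that $T_1$ inherits power boundedness from $T$ via the invariance of $\overline{\mathcal{R}(T^n)}$ is a detail the paper only asserts, so it is a welcome addition.
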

\begin{proof}
We consider the
following two cases:\par \vskip 0.2 cm \noindent Case 1: If $\overline{{\mathcal R}(T^n)}$ is dense, then $T$ is an $m$-isometric operator which is power bounded, thus $T$ is a isometry by \cite[Theorem 2]{COT}. It follows  that $T$ is an $n$-quasi-isometry. \par \vskip 0.2 cm \noindent Case 2: If $\overline{{\mathcal R}(T^n)}$ is not dense. By Theorem \ref{2.1}  we write the matrix representation of $T$ on
${\mathcal H}=
\overline{{\mathcal R}(T^n)} \oplus {mathcal N}(T^{*n})$ as follows $T=\left(
         \begin{array}{ccc}
           T_1 & T_2 \\
          0 & T_3
         \end{array}
       \right)$
where $T_1 = T|\overline{{\mathcal R}(T^n)}$ is an $m$-isometric operator
and $T_3^n=0.$ By taking into account that $T$ is power bounded,  it is easily to check
that $T_1$ is power bounded. From which we deduce that $T_1$ is an isometry.  The result  follows by applying the statement $(2)$ of Theorem \ref{2.1}
\end{proof}

\par \vskip 0.2 cm \noindent Recall that for two operators $T, S$
in ${\mathcal L}({\mathcal H})$, the commutator $[T, S]$ is defined to be
$$[T, S] = TS-ST.$$
\par \vskip 0.2 cm \noindent  A  pair of operators $(T, S)\in {\mathcal L}({\mathcal H})^2$ is said
to be a doubly commuting pair if $(T, S)$ satisfies $TS = ST$ and $T^*S=ST^*$ or equivalently $[T,S]=[T,S^*]=0.$
\par\vskip 0.2 cm \noindent
In \cite[ Theorem 2.2]{OS}, it has proved that if $T$ and $S$ are commuting bounded
linear operators on a Banach space such that $T$ is a $2$-isometry and $S$ is an
$m$-isometry, then $ST$ is an $(m+1)$-isometry. This result was improved in \cite[Theorem 3.3]{BMN} as follows: if $T S = ST$, $T$ is an $(m, p)$-isometry and $S$ is an $(l, p)$-isometry,
then $ST$ is an $(m +l-1, p)$-isometry.
It is natural to ask whether the product  of two $n$-quasi-$m$-isometries is $n$-quasi-$m$-isometry.
The following theorem gives an affirmative answer  under suitable conditions.\par \vskip 0.2 cm \noindent
\begin{theorem}\label{th26} Let $S$ and $T$ be in ${\mathcal L}({\mathcal H})$ are doubly commuting operators and let $m,l,n_1,n_2$ be positive integers.
If $T$ is an $n_1$-quasi-$m$-isometry  and $S$ is an $n _2$-quasi-$l$-isometry, then   $TS$ is a $n_0=\max\{n_1,n_2\}$-quasi-$(m+l-1)$-isometry.
\end{theorem}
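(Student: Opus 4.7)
The plan is to reduce the statement to the ordinary $m$-isometry product formula on an invariant subspace, using Lemma \ref{lem21} as the bridge between $n$-quasi-$m$-isometries and genuine $m$-isometries restricted to $\overline{\mathcal{R}(T^n)}$. Set $n_0 = \max\{n_1,n_2\}$ and let $\mathcal{M} := \overline{\mathcal{R}((TS)^{n_0})}$. The program is to show that $\mathcal{M}$ is invariant under both $T$ and $S$, that $T|_{\mathcal{M}}$ is a genuine $m$-isometry and $S|_{\mathcal{M}}$ a genuine $l$-isometry, and then to invoke \cite[Theorem 3.3]{BMN} on the commuting pair of restrictions.

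First I would exploit the commutation $TS=ST$ to write $(TS)^{n_0}=T^{n_0}S^{n_0}=S^{n_0}T^{n_0}$, from which $\mathcal{R}((TS)^{n_0})\subseteq \mathcal{R}(T^{n_0})\cap\mathcal{R}(S^{n_0})$, hence $\mathcal{M}\subseteq\overline{\mathcal{R}(T^{n_0})}\cap\overline{\mathcal{R}(S^{n_0})}$. Since $T$ and $S$ each commute with $(TS)^{n_0}$, the subspace $\mathcal{M}$ is invariant under both operators. Next, using the observation made just before Corollary \ref{cor23} that every $n$-quasi-$m$-isometry is automatically $(n+1)$-quasi-$m$-isometric, $T$ is $n_0$-quasi-$m$-isometric and $S$ is $n_0$-quasi-$l$-isometric. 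Lemma \ref{lem21} then gives, for every $x\in\mathcal{M}$,
$$\sum_{0\le k\le m}(-1)^{m-k}\binom{m}{k}\|T^{k}x\|^{2}=0 \quad\text{and}\quad \sum_{0\le k\le l}(-1)^{l-k}\binom{l}{k}\|S^{k}x\|^{2}=0.$$

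Because $\mathcal{M}$ is $T$- and $S$-invariant, the restrictions $T':=T|_{\mathcal{M}}$ and $S':=S|_{\mathcal{M}}$ are commuting bounded operators on $\mathcal{M}$, and the two identities above (together with the fact that the operators $\beta_m(T')$ and $\beta_l(S')$ are self-adjoint) show that $T'$ is an honest $m$-isometry and $S'$ an honest $l$-isometry on $\mathcal{M}$. Applying the Hilbert-space case ($p=2$) of \cite[Theorem 3.3]{BMN} to the commuting pair $(T',S')$ yields that $T'S'=(TS)|_{\mathcal{M}}$ is an $(m+l-1)$-isometry on $\mathcal{M}$, i.e.
$$\sum_{0\le k\le m+l-1}(-1)^{m+l-1-k}\binom{m+l-1}{k}\|(TS)^{k}x\|^{2}=0,\qquad x\in \overline{\mathcal{R}((TS)^{n_0})}.$$
One final invocation of Lemma \ref{lem21}, now for the operator $TS$ with $n=n_0$ and $m$ replaced by $m+l-1$, concludes that $TS$ is $n_0$-quasi-$(m+l-1)$-isometric.

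The only delicate point is the bookkeeping that accompanies passing from $n_1$ and $n_2$ to the common index $n_0$: it is this step that guarantees $\mathcal{M}\subseteq\overline{\mathcal{R}(T^{n_0})}\cap\overline{\mathcal{R}(S^{n_0})}$, which is what allows the two isometry identities to hold simultaneously on the \emph{same} subspace so that \cite[Theorem 3.3]{BMN} can be applied to the restrictions. The doubly commuting hypothesis enters the argument only through plain commutation (used both for the invariance of $\mathcal{M}$ and for the product formula from \cite{BMN}); the adjoint part $TS^{*}=S^{*}T$ is not needed for this reduction, although it would become relevant if one wished instead to argue via the matrix representation of Theorem \ref{th21}, because that approach would require the subspace $\overline{\mathcal{R}(T^{n_0})}$ to reduce $S$ rather than merely be invariant.
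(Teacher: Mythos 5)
Your proof is correct, but it takes a genuinely different route from the paper's. The paper argues purely algebraically: it invokes Gu's expansion \cite[Lemma 12]{CG}, valid for doubly commuting pairs,
$\beta_{m+l-1}(TS)=\sum_{0\le j\le m+l-1}\binom{m+l-1}{j}T^{*j}\beta_{m+l-1-j}(T)\,T^{j}\,\beta_{j}(S)$,
conjugates by $(TS)^{*n_0}$ and $(TS)^{n_0}$, uses the adjoint commutation relations to regroup each term as
$T^{*j}\bigl(T^{*n_0}\beta_{m+l-1-j}(T)T^{n_0}\bigr)T^{j}\bigl(S^{*n_0}\beta_{j}(S)S^{n_0}\bigr)$,
and then kills every term via Corollary \ref{cor23}: the $S$-factor vanishes for $j\ge l$ and the $T$-factor vanishes for $j\le l-1$ since then $m+l-1-j\ge m$. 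You instead localize to the invariant subspace $\overline{\mathcal{R}((TS)^{n_0})}$, convert both hypotheses into genuine $m$- and $l$-isometry norm identities there via Lemma \ref{lem21}, apply the product theorem of \cite{BMN} to the commuting restrictions, and translate back with Lemma \ref{lem21}; all the individual steps (the inclusion $\overline{\mathcal{R}((TS)^{n_0})}\subseteq\overline{\mathcal{R}(T^{n_0})}\cap\overline{\mathcal{R}(S^{n_0})}$, the invariance, the promotion from $n_i$ to $n_0$) check out. Your route is arguably more economical, reusing the already-cited \cite[Theorem 3.3]{BMN} instead of importing Gu's identity, and --- as you observe --- it never uses $TS^{*}=S^{*}T$, so it actually establishes the conclusion under the weaker hypothesis $TS=ST$ alone; this does not conflict with the paper's subsequent counterexample, whose operators fail even to commute. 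What the paper's computation buys in exchange is the explicit term-by-term expansion of $\beta_{m+l-2,\,n}(TS)$, which is recycled in Section 3 to characterize exactly when the product is a \emph{strict} $(m+l-1)$-isometry; your subspace reduction does not yield that refinement.
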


\begin{proof} Under the assumption that $T$ and $S$ are doubly commuting, it follows that
$[T^*, S^*]=[T,S]=[T,S^*]=0.$ By taking into account  \cite[Lemma 12]{CG} we obtain that
\begin{eqnarray*}
&&\beta_{m+l-1,\;n_0}(TS)\\&=&\big(TS\big)^{*(n_0)}\beta_{m+l-1}(TS)\big(TS\big)^{n_0}\\
&=&(T^*)^{n_0}(S^*)^{n_0} \, \Lambda_{m+l-1}(TS) \, (T)^{n_0}(S)^{n_0}\\&=&
 (T^*)^{n_0}(S^*)^{n_0} \bigg(\sum_{0\leq j \leq m+l-1} \binom{m+l-1}{j}  {T^{\ast}}^j\beta_{m+l-1-j}(T) \,T^j  \beta_j(S)  \bigg)(T)^{n_0}(S)^{n_0}
 \cr
&=&  \bigg(\sum_{0\leq j \leq m+l-1} \binom{m+l-1}{j}  \, {T^{\ast}}^j\underbrace{\big(T^*\big)^{n_0} \beta_{m+l-1-j}(T) T^{n_0}}\,T^j  \underbrace{\big(S^*\big)^{ n_0}\beta_j(S) S^{n_0}} \bigg).
\end{eqnarray*}
  Since $S$ is an $n_2$-quasi-$l$-isometry, it follows by Corollary \ref{2.3}  that $\big(S^*\big)^{n_0}\beta_j(S)S^{n_0}=0$  for $j\geq l$. On the other hand, we have if $j\leq l-1$, then $m+l-1-j\geq m+l-1-l+1=m$, and so $ \big(T^*\big)^{n_0}\beta_{k+m-1-j}(T)T^{n_0}=0$ by the fact that $T$ is an $n_1$-quasi-$m$-isometry and Corollary \ref{2.3}. Therefore
 $TS$ is an $n_0=\max\{n_1,n_2\}$-quasi-$(m+l-1)$-isometry.
This completes the proof.
\end{proof}
\par \vskip 0.2 cm \noindent The following example shows that Theorem \ref{th26} is not necessarily true if $S,T$ are
not doubly commuting.\par \vskip 0.2 cm \noindent
\begin{example}
We consider the operators
$ T=\left(
      \begin{array}{cc}
        1 & 1 \\
        0 & 1 \\
      \end{array}
    \right) \ \mbox{and}\ S=\left(
      \begin{array}{cc}
        2 & 1 \\
        -1 & 0 \\
      \end{array}
    \right) $ on the two dimensional Hilbert space $\mathbb{C}^2.$
    Note that $ST\neq TS$. Moreover, by a direct computation, we show that $T$ is a quasi-$3$-isometry and $S$ is a $2$-quasi-$3$-isometry.
    However neither $TS$ nor $ST$ is  a $2$-quasi-$5$-isometry.\end{example}
\par \vskip 0.2 cm \noindent
\begin{corollary}\label{2.5}
Let $T,S\in {\mathcal L}({\mathcal H})$ are doubly commuting operators such that $T$ is an $n_1$-quasi-$m$-isometry and $S$ is an $n_2$-quasi-$l$-isometry, then $T^pS^q$ is a
$\max\{n_1,\;n_2\}$-quasi-$(m+l-1)$-isometry for all positive integers $p$ and $q$.
\end{corollary}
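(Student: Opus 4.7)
The plan is to reduce the statement to a direct application of Theorem \ref{2.3} (powers of an $n$-quasi-$m$-isometry remain $n$-quasi-$m$-isometries) followed by Theorem \ref{th26} (the product of doubly commuting $n_i$-quasi-$m_i$-isometries is an $n$-quasi-$(m+l-1)$-isometry with $n=\max\{n_1,n_2\}$). Since both of these are established earlier in the excerpt, nothing deep remains beyond bookkeeping.

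First I would invoke Theorem \ref{2.3} to obtain that $T^p$ is an $n_1$-quasi-$m$-isometry and $S^q$ is an $n_2$-quasi-$l$-isometry, for arbitrary positive integers $p$ and $q$. Next I would verify that $T^p$ and $S^q$ remain doubly commuting: from $[T,S]=[T,S^*]=0$ one immediately gets $T^pS = ST^p$ and $T^pS^*=S^*T^p$ by iterating the two commutation relations, and then $T^pS^q=S^qT^p$ and $T^pS^{*q}=S^{*q}T^p$ by iterating once more. Equivalently, $(T^p)^*S^q = (T^*)^pS^q = S^q(T^*)^p = S^q(T^p)^*$, so $[T^p,S^q]=[T^p,(S^q)^*]=0$.

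Having established these two facts, I apply Theorem \ref{th26} with the pair $(T^p,S^q)$ in place of $(T,S)$. The conclusion is precisely that
\[
T^pS^q \text{ is a } \max\{n_1,n_2\}\text{-quasi-}(m+l-1)\text{-isometry},
\]
which is the statement of the corollary.

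The only potential pitfall is the verification of double commutativity under powers, but this is a routine induction on $p$ and $q$ using the two given commutation relations; no spectral or operator-theoretic machinery is needed. Consequently there is no genuine obstacle, and the corollary is essentially a packaging of Theorem \ref{2.3} and Theorem \ref{th26}.
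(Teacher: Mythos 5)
Your proposal is correct and follows exactly the paper's argument: Theorem \ref{2.3} gives that $T^p$ and $S^q$ are respectively an $n_1$-quasi-$m$-isometry and an $n_2$-quasi-$l$-isometry, the double commutativity passes to powers, and Theorem \ref{th26} then yields the conclusion. Your explicit verification that $[T^p,S^q]=[T^p,(S^q)^*]=0$ is a small added detail the paper leaves implicit.
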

\begin{proof}
Since $T$ and $S$ are doubly commuting, then $T^p$ and $S^q$ are doubly commuting. By Theorem \ref{2.3} we know that $T^p$  is an $n_1$-quasi-$m$-isometry and $S^q$ is an $n_2$-quasi-$l$-isometry. Now  by applying Theorem \ref{th26}, we  get that $T^pS^q$ is a $\max\{n_1,\;n_2\}$-quasi-$(m+l-1)$-isometry.
This completes the proof.
\end{proof}

\par\vskip 0.2 cm \noindent
Let ${\mathcal H} \overline{\otimes} {\mathcal H}$ denote the completion,
endowed with a reasonable uniform cross-norm, of the algebraic tensor product ${\mathcal H} \otimes {\mathcal H}$
of ${\mathcal H}$ and ${\mathcal H}$.  For $ T\in {\mathcal L }({\mathcal H})$  and $S \in {\mathcal L}({\mathcal H})$ , $T \otimes S \in {\mathcal L}({\mathcal H}\overline{\otimes} {\mathcal H})$ denote the tensor product
operator defined by $T$ and $S$. \par \vskip 0.2 cm \noindent In the following proposition we prove that the tensor product of an $n_1$-quasi-$m$-isometric
operator with an $n_2$-quasi-$l$-isometric operator is a $\max\{n_1,n_2\}$-quasi-$(m +l-1)$-isometric operator. This proposition generalizes \cite[Theorem 2.10]{PD}.
\par \vskip 0.2 cm \noindent
\begin{proposition}
If $ T\in {\mathcal L}({\mathcal H})$ is an $n_1$-quasi-$m$-isometry and $S \in {\mathcal L}({\mathcal H})$ is an $n_2$-quasi-$l$-isometry, then $T\otimes  S$ is a $\max\{n_1,n_2\}$-quasi-$(m + l - 1)$-isometric.
\end{proposition}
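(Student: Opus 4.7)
The plan is to reduce the tensor product $T\otimes S$ to a product of two doubly commuting factors, namely $A:=T\otimes I$ and $B:=I\otimes S$, and then invoke Theorem \ref{th26} directly. Writing $T\otimes S=(T\otimes I)(I\otimes S)=AB$ is the natural decomposition, and the benefit is that powers and adjoints split cleanly: $A^k=T^k\otimes I$, $A^{*k}=T^{*k}\otimes I$, and analogously for $B$. In particular $AB=BA$ trivially and $A^*B=T^*\otimes S=BA^*$, so the pair $(A,B)$ doubly commutes, which is exactly the hypothesis required by Theorem \ref{th26}.

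Next, I would verify that $A$ inherits the $n_1$-quasi-$m$-isometry property from $T$. Using the splitting of powers of $A$, the operator $\beta_m(A)$ collapses to $\beta_m(T)\otimes I$, and sandwiching by $A^{*n_1}$ and $A^{n_1}$ gives
\[
\beta_{m,n_1}(A)=\bigl(T^{*n_1}\beta_m(T)T^{n_1}\bigr)\otimes I=\beta_{m,n_1}(T)\otimes I=0,
\]
because $T$ is an $n_1$-quasi-$m$-isometry. The symmetric calculation for $B=I\otimes S$ shows $B$ is an $n_2$-quasi-$l$-isometry. These are essentially book-keeping steps but they are the crux that allows the reduction to work.

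Having established that $A$ and $B$ are doubly commuting, with $A$ an $n_1$-quasi-$m$-isometry and $B$ an $n_2$-quasi-$l$-isometry, I apply Theorem \ref{th26} to conclude that $AB=T\otimes S$ is a $\max\{n_1,n_2\}$-quasi-$(m+l-1)$-isometry. No further identity-juggling with $\beta_{m+l-1}(T\otimes S)$ or combinatorial sums is needed, since Theorem \ref{th26} has already done the heavy lifting with the Berm\'udez--Mart\'in\'on--Noda type expansion.

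The only subtlety, which I expect to be the mildest obstacle, is the verification that the tensor factorizations of $A^{n_1}$, $A^{*n_1}$ and $\beta_m(A)$ really do commute past each other in the required way when passing from ${\mathcal L}({\mathcal H})$ to ${\mathcal L}({\mathcal H}\overline{\otimes}{\mathcal H})$; this is standard for bounded operators under a reasonable uniform cross-norm, so nothing deep is required. The approach is considerably cleaner than expanding $\beta_{m+l-1}(T\otimes S)$ directly, and it has the added benefit of making the role of the max of the quasi-indices transparent.
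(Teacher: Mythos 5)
Your proposal is correct and follows essentially the same route as the paper: factor $T\otimes S=(T\otimes I)(I\otimes S)$, note the pair is doubly commuting, check that $T\otimes I$ and $I\otimes S$ inherit the quasi-isometry properties, and invoke Theorem \ref{th26}. You actually spell out the verification $\beta_{m,n_1}(T\otimes I)=\beta_{m,n_1}(T)\otimes I=0$, which the paper merely asserts as an observation.
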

\begin{proof}
Observe that an operator $T \in {\mathcal L}({\mathcal H})$ is $n$-quasi $m$-isometric if and only if $T \otimes I$ and $I \otimes T $ are $n$-quasi-$m$-isometry.
In view of the fact that $$ T\otimes S = (T \otimes I)(I \otimes S)=(I \otimes S) (T\otimes I)$$ it follows that
$$[T \otimes I, I \otimes S]=[T \otimes I, (I \otimes S)^*]=0.$$ Now $T \otimes I$ is an $n_1$-quasi-$m$-isometry and $I \otimes S$ is an  $n_2$-quasi-$m$-isometry such that $T \otimes I$ and $I \otimes S$ are doubly commuting operators. By applying Theorem \ref{th26} we obtain that
$(T \otimes I)(I \otimes S)$ is a $\max\{n_1,n_2\}$-quasi-$(m +l-1)$-isometric operator. Hence $T\otimes S$ is an $\max\{n_1,n_2\}$-quasi-$(m +l-1)$-isometric as required.
\end{proof} \par \vskip 0.2 cm \noindent
The following corollary is an immediate consequence of Theorem \ref{2.3} and Proposition \ref{2.2}.
We omitted its proof.
\begin{corollary}\label{2.6}
If $ T\in {\mathcal L}({\mathcal H})$ is an $n_1$-quasi-$m$-isometry and $S \in {\mathcal L}({\mathcal H})$ is an $n_2$-quasi-$l$-isometry, then $T^p\otimes  S^q$ is a $\max\{n_1,n_2\}$-quasi-$(m + l - 1)$-isometry.
\end{corollary}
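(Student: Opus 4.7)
The plan is to combine the two invoked results in the obvious way: first use Theorem \ref{2.3} to upgrade $T$ and $S$ individually to powers, then use the tensor-product proposition to combine them.

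More precisely, I would first observe that by Theorem \ref{2.3}, since $T$ is an $n_1$-quasi-$m$-isometry, every power $T^p$ is again an $n_1$-quasi-$m$-isometry, and similarly $S^q$ is an $n_2$-quasi-$l$-isometry for every positive integer $q$. Thus we may replace the pair $(T,S)$ by the pair $(T^p, S^q)$ without changing the quasi-isometry orders $(n_1,m)$ and $(n_2,l)$.

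Next I would apply the previous proposition (the tensor-product statement) directly to the operators $T^p$ and $S^q$. That proposition yields that
\[
T^p \otimes S^q
\]
is a $\max\{n_1, n_2\}$-quasi-$(m+l-1)$-isometry, which is exactly the conclusion.

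There is essentially no obstacle here: the corollary is a two-line chaining of Theorem \ref{2.3} with the tensor-product proposition, and no additional commutativity or doubly-commuting hypothesis needs to be checked, since tensor factors always doubly commute in the sense used (through $T^p \otimes I$ and $I \otimes S^q$). Consequently the proof can indeed be omitted, as the authors note.
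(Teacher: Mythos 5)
Your proposal is correct and is exactly the argument the paper intends: it declares the corollary ``an immediate consequence of Theorem \ref{2.3} and Proposition \ref{2.2}'' and omits the proof, which is precisely your chaining of the powers theorem with the tensor-product proposition applied to $T^p$ and $S^q$. Nothing further needs to be checked, since the tensor-product proposition carries no commutativity hypothesis on its two arguments.
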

\par\vskip 0.2 cm \noindent
It was proved in \cite[Thoerem 2.2]{BMN1} that if $T\in {\mathcal L}({\mathcal H})$ is
an isometry and $Q\in {\mathcal L}({\mathcal H})$ is a nilpotent operator of order $p$ such
that $TQ=QT$, then $T+Q$-is a strict $(2p-1)$-isometry. Later T. Berm\'{u}dez et al. \cite{TMVN} gave a partial generalization to  $m$-isometry, that is
if $T$ is  an $m$-isometry with $m > 1$, $Q$ is a nilpotent operator with order $p$, and $TQ=QT$, then  $T+Q$ in a  $(m+2p-2)$-isometry.
 Recently, C. Gu. and  M. Stankus \cite{CM} gave a  generalization to  $m$-isometry, that is
if $T$ is  an $m$-isometry with $m > 1$, $Q$ is a nilpotent operator with order $p$, and $TQ=QT$, then  $T+Q$ is a  strict $(m+2p-2)$-isometry.
 The following Theorem  states the corresponding partial generalization
 to the sum of an $n$-quasi-$m$-isometry and a nilpotent operator.
\par \vskip 0.2 cm \noindent
\begin{theorem}\label{2.7}
Let $ T, Q \in {\mathcal L}({\mathcal H})$ such that $T$ commutes with $Q$. If $T$  is an $n$-quasi-$m$-isometry and $Q$
is a nilpotent operator of order $p$, then $T +Q$ is a $(n+p)$-quasi-$(m +2p-2)$-isometry.
\end{theorem}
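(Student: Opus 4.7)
The plan is to reduce $T+Q$ to a block upper-triangular form, apply the known theorem of Berm\'udez et al.~\cite{TMVN} (with \cite{BMN1} supplying the case $m=1$) on the isometric block, and use a binomial expansion to handle the nilpotent block. If $\overline{\mathcal{R}(T^n)}=\mathcal{H}$, then $T$ is itself an $m$-isometry, so \cite{TMVN} directly gives that $T+Q$ is an $(m+2p-2)$-isometry, hence a fortiori an $(n+p)$-quasi-$(m+2p-2)$-isometry. Otherwise, Theorem~\ref{th21} yields
\begin{equation*}
T=\begin{pmatrix}T_1 & T_2\\ 0 & T_3\end{pmatrix}\quad\text{on}\quad \mathcal{H}=\overline{\mathcal{R}(T^n)}\oplus\mathcal{N}(T^{*n}),
\end{equation*}
with $T_1$ an $m$-isometry on $\overline{\mathcal{R}(T^n)}$ and $T_3^n=0$.

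Next I obtain a compatible matrix form for $Q$. From $TQ=QT$ one gets $QT^n=T^nQ$, so $Q\bigl(\overline{\mathcal{R}(T^n)}\bigr)\subseteq \overline{\mathcal{R}(T^n)}$; taking adjoints, $Q^*T^{*n}=T^{*n}Q^*$, so $Q^*\bigl(\mathcal{N}(T^{*n})\bigr)\subseteq \mathcal{N}(T^{*n})$. Hence $Q=\begin{pmatrix}Q_1 & Q_2\\ 0 & Q_3\end{pmatrix}$ on the same decomposition, and comparing diagonals in $TQ=QT$ gives $T_1Q_1=Q_1T_1$ and $T_3Q_3=Q_3T_3$, while $Q^p=0$ forces $Q_1^p=Q_3^p=0$. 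Therefore
\begin{equation*}
T+Q=\begin{pmatrix}T_1+Q_1 & T_2+Q_2\\ 0 & T_3+Q_3\end{pmatrix}.
\end{equation*}
On the $(1,1)$-block, \cite{TMVN} (resp.\ \cite{BMN1} for $m=1$) shows that $T_1+Q_1$ is an $(m+2p-2)$-isometry. On the $(2,2)$-block, commutativity together with $T_3^n=Q_3^p=0$ yields
\begin{equation*}
(T_3+Q_3)^{n+p-1}=\sum_{j=0}^{n+p-1}\binom{n+p-1}{j}T_3^{j}Q_3^{\,n+p-1-j}=0,
\end{equation*}
since for each $j$ either $j\ge n$ (so $T_3^j=0$) or $n+p-1-j\ge p$ (so $Q_3^{n+p-1-j}=0$); consequently $(T_3+Q_3)^{n+p}=0$.

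To conclude, I invoke the implication $(2)\Rightarrow(1)$ of Theorem~\ref{th21} applied to $T+Q$ with parameters $m+2p-2$ and $n+p$ in place of $m$ and $n$. The main subtle point, and the step requiring the most care, is that the orthogonal decomposition used here is $\overline{\mathcal{R}(T^n)}\oplus\mathcal{N}(T^{*n})$ rather than $\overline{\mathcal{R}((T+Q)^{n+p})}\oplus\mathcal{N}((T+Q)^{*(n+p)})$; however, an inspection of the matrix calculation in the proof of Theorem~\ref{th21} reveals that the identity $\beta_{M,N}=0$ is a purely algebraic consequence of the block upper-triangular structure together with $\beta_M(\text{top-left block})=0$ and $(\text{bottom-right block})^N=0$, and does not depend on the particular identification of the decomposition. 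Applying this with $M=m+2p-2$, $N=n+p$ delivers $\beta_{m+2p-2,\,n+p}(T+Q)=0$ and completes the proof.
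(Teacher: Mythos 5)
Your proof is correct, but it follows a genuinely different route from the one in the paper. The paper argues purely algebraically: it invokes the Gu--Stankus expansion $\beta_q(T+Q)=\sum_{k}\sum_{j}\binom{q}{k}\binom{q-k}{j}(T^*+Q^*)^kQ^{*j}\beta_{q-k-j}(T)T^jQ^k$ from \cite[Lemma 1]{CM}, sandwiches it between the binomial expansions of $(T+Q)^{*(n+p)}$ and $(T+Q)^{n+p}$, and kills every term either by nilpotency (when $k\geq p$ or $j\geq p$) or by the identity $T^{*n}\beta_{q-k-j}(T)T^n=0$ (when $k,j<p$, so that $q-k-j\geq m$, using Corollary \ref{cor23}); no spatial decomposition enters. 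You instead triangularize: you check that $TQ=QT$ forces $Q$ to be block upper triangular with respect to $\overline{\mathcal{R}(T^n)}\oplus\mathcal{N}(T^{*n})$ with $T_1Q_1=Q_1T_1$ and $Q_1^p=Q_3^p=0$, apply the Berm\'udez et al.\ perturbation theorem to the corner $T_1+Q_1$, and verify $(T_3+Q_3)^{n+p}=0$. The one point that needs care --- that the $(2)\Rightarrow(1)$ computation of Theorem \ref{th21} is valid for an arbitrary orthogonal triangularization, not only the canonical one attached to $T+Q$ itself --- you correctly identify and justify; it can also be seen via Lemma \ref{lem21}, since $\mathcal{R}\big((T+Q)^{n+p}\big)\subseteq\overline{\mathcal{R}(T^n)}$ and $T+Q$ restricted to that subspace is $T_1+Q_1$. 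Your approach buys a shorter, more conceptual proof that outsources the real combinatorics to the known $m$-isometry-plus-nilpotent theorem (with \cite{BMN1} covering $m=1$, and with the trivial remark that a nilpotency order $p'\le p$ for $Q_1$ still yields an $(m+2p-2)$-isometry); the paper's computation is self-contained and, being purely hereditary-polynomial in nature, does not rely on the Hilbert-space decomposition machinery at all.
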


\begin{proof} We need to show that $\beta_{m+2p-2,\;\alpha}\big(T+Q\big)=0.$ Set  $q=m+2p-2$ and $\alpha=n+p$,
by \cite[Lemma 1]{CM} we have
$$\beta_q\big(T+Q\big)=\sum_{0\leq k\leq q}\sum_{0\leq j\leq q-k}\binom{q}{k}\binom{q-k}{j}\big(T^*+Q^*\big)^kQ^{*j}\beta_{q-k-j}(T)T^jQ^k.$$

In fact, note that
\begin{eqnarray*}
&&\beta_{m+2p-2,\;\alpha}\big(T+Q\big)\\&=&\big(T+Q\big)^{*\alpha}\beta_{m+2p-2}\big(T+Q\big)\big(T+Q\big)^{\alpha}\\&=&
\bigg(\sum_{0\leq r\leq \alpha}\binom{\alpha}{r}T^{*(\alpha-r)}Q^{*r}\bigg)\bigg(\sum_{0\leq k\leq q}\sum_{0\leq j\leq q-k}\binom{q}{k}\binom{q-k}{j}\big(T^*+Q^*\big)^kQ^{*j}\beta_{q-k-j}(T)T^jQ^k \bigg)\\&&\times \bigg(\sum_{0\leq r\leq \alpha}\binom{\alpha}{r}T^{\alpha-r}Q^{r}\bigg).
\end{eqnarray*}
Now observe that if $k\geq p$ or $j\geq p$ then $Q^{*k}=0$ or $Q^{*j}=0$ and hence $$\big(T^*+Q^*\big)^kQ^{*j}\beta_{q-k-j}(T)T^jQ^k=0.$$
\noindent However , if $k< p$ and $j< p$, we obtain
$$q-k-j=m+2p-2-k-j\geq m+2p-2-(p-1)-(p-1)=m$$  and using the fact that $T$ is an $n$-quasi-$m$-isometry, we get
$$T^{*(n+p-r)}\beta_{q-k-j}(T)T^{n+p-r}=0\;\;\;\text{for}\;\;r=0,\cdots,p$$  and
$$T^{*(n+p-r)}Q^{*r}\beta_{q-k-j}(T)T^{n+p-r}Q^r=0\;\;\;\text{for}\;\;r=p+1,\cdots,n+p.$$
Combining the above arguments  we obtain
$\beta_{m+2p-2,\;n+p}\big(T+Q\big)=0.$
\end{proof}
\par \vskip 0.2 cm \noindent
\begin{remark}
A simple example shows that the commuting condition of $T$ and $Q$ can not
be removed from the above theorem.\end{remark}
\begin{example}
Let $T=\left(
         \begin{array}{cc}
           -5 & 0 \\
           0 & -1 \\
         \end{array}
       \right)$ and $Q=\left(
                         \begin{array}{cc}
                           0 & 1 \\
                           0 & 0 \\
                         \end{array}
                       \right)
       .$ Then T is a quasi-$3$-isometry and $Q^2 = 0$. Set $S=T+Q$, by direct calculation we show that $T$ is not $5$-quasi-$5$-isometry.
\end{example}
\par \vskip 0.2 cm \noindent
\begin{corollary}\label{2.6}
Let $T \in {\mathcal L}({\mathcal H})$ be an $n$-quasi-$m$-isometry and $ Q\in {\mathcal L}({\mathcal H})$ be a nilpotent operator of order $p$. Then $T\otimes I+I\otimes Q$ is a $(n+ p)$-quasi-$(m+2p-2)$-isometry.
\end{corollary}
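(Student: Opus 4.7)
The plan is to reduce this statement to Theorem \ref{2.7} applied to the operators $T\otimes I$ and $I\otimes Q$ on the tensor product space ${\mathcal H}\overline{\otimes}{\mathcal H}$. This fits the template of Theorem \ref{2.7} perfectly, so the work amounts to verifying its three hypotheses in the tensor setting.

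First, I would check that $T\otimes I$ is itself an $n$-quasi-$m$-isometry. This is exactly the observation already used in the proof of the preceding Proposition: because $(T\otimes I)^k = T^k\otimes I$ and $(T\otimes I)^{*k} = T^{*k}\otimes I$, one computes
\begin{equation*}
\beta_{m,n}(T\otimes I)=\beta_{m,n}(T)\otimes I=0.
\end{equation*}
Next, I would verify that $I\otimes Q$ is a nilpotent operator of order $p$: since $(I\otimes Q)^p=I\otimes Q^p=0$ and $(I\otimes Q)^{p-1}=I\otimes Q^{p-1}\neq 0$, the order is indeed $p$.

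The third hypothesis is the commutation $[T\otimes I,\,I\otimes Q]=0$, which is immediate from the identity $(T\otimes I)(I\otimes Q)=T\otimes Q=(I\otimes Q)(T\otimes I)$. Once these three facts are in hand, Theorem \ref{2.7} applies directly to $A:=T\otimes I$ and $B:=I\otimes Q$, yielding that
\begin{equation*}
A+B=T\otimes I+I\otimes Q
\end{equation*}
is an $(n+p)$-quasi-$(m+2p-2)$-isometry, which is the claim.

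I do not expect a real obstacle here; the corollary is essentially a packaging of Theorem \ref{2.7} together with the standard tensor identities $(S\otimes R)^k=S^k\otimes R^k$ and $(S\otimes R)^{*}=S^{*}\otimes R^{*}$. The only point that deserves a brief sentence rather than a one-line citation is the preservation of the $n$-quasi-$m$-isometry property under tensoring with the identity, but this is exactly the observation highlighted in the preceding Proposition, so it can simply be invoked.
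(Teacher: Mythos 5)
Your proposal is correct and follows exactly the paper's own route: the paper likewise observes that $T\otimes I$ is an $n$-quasi-$m$-isometry, that $I\otimes Q$ is nilpotent of order $p$, and that the two commute, and then invokes Theorem \ref{2.7}. Your version is merely a little more explicit about why $\beta_{m,n}(T\otimes I)=\beta_{m,n}(T)\otimes I=0$, which is a welcome addition but not a different argument.
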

\begin{proof}
We note that $ T\otimes I\in {\mathcal L}({\mathcal H} \overline{\otimes} {\mathcal H})$ is an $n$-quasi-$m$-isometry and  $   I\otimes Q\in {\mathcal L}({\mathcal H} \overline{\otimes} {\mathcal H})$ is a nilpotent of order $p$. Moreover $(T\otimes I)(I\otimes Q)=(I\otimes Q)(T\otimes I)$.
\end{proof}
\par \vskip 0.2 cm \noindent

\begin{corollary} Let $T_j \in {\mathcal L}({\mathcal H})$ be an $n_j$-quasi-$m_j$-isometry for $j=1,\cdots ,d$. Define
$$S=\left(
    \begin{array}{cccc}
      T_1 & \alpha_1 I & 0 &\cdots \\
      0 &  \ddots &  \ddots &\ddots \\
      \ddots &   \ddots &  \ddots  & \alpha_{d-1}I \\
      0 &  \ddots  & 0& T_d \\
    \end{array}
  \right) \;\text{on}\;\;{\mathcal H}^{(d)}:={\mathcal H}\oplus\cdots \oplus{\mathcal H}
$$
  where $\alpha_j$ is a complex number for each $j = 1,\cdots, d-1 $ and ${\mathcal H}^{(d)}$ is the sum of $d$-copies of ${\mathcal H}$, then ${ S}$ is a
 $(n+d)$-quasi-$(m+2d-2)$-isometry  where $n=\displaystyle\max_{1\leq j\leq d}\{n_j,\}$ and $m=\displaystyle\max_{1\leq j\leq d}\{m_j,\}.$
\end{corollary}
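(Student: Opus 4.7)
The plan is to express $S$ on ${\mathcal H}^{(d)}$ as $S=A+N$, where $A:=T_1\oplus T_2\oplus\cdots\oplus T_d$ is the block-diagonal operator and $N:=S-A$ is the strictly block-upper-triangular operator whose only non-zero entries are the $\alpha_jI$ sitting in the superdiagonal positions $(j,j+1)$. The statement will then follow immediately from Theorem~\ref{2.7} applied with $T\leftarrow A$, $Q\leftarrow N$ and nilpotency index $p\leftarrow d$.

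Three ingredients have to be checked before invoking Theorem~\ref{2.7}. First, $A$ is itself an $n$-quasi-$m$-isometry: each $T_j$ is $n_j$-quasi-$m_j$-isometric and so, by Corollary~\ref{cor23} (which boosts $m_j$ up to $m\geq m_j$) together with the trivial observation that $n_j$ can be replaced by any larger $n\geq n_j$ simply by pre- and post-multiplying $\beta_{m,n_j}(T_j)=0$ by $T_j^{*(n-n_j)}$ and $T_j^{n-n_j}$, every $T_j$ is $n$-quasi-$m$-isometric with the uniform parameters; a finite orthogonal direct sum of $n$-quasi-$m$-isometries is $n$-quasi-$m$-isometric because $\beta_{m,n}$ decouples across the diagonal summands (this is visible either from the operator identity or from Theorem~\ref{th21} applied block-wise), so $A$ qualifies. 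Second, an induction on $k$ shows that $N^k$ has only the single non-zero block $\alpha_i\alpha_{i+1}\cdots\alpha_{i+k-1}I$ at position $(i,i+k)$, which forces $N^d=0$, so $N$ is nilpotent of order at most $d$.

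The third ingredient is the commutation $AN=NA$, and this is the step I expect to be the main obstacle. A direct block-entry computation yields $(AN)_{j,j+1}=\alpha_jT_j$ and $(NA)_{j,j+1}=\alpha_jT_{j+1}$, so $AN-NA$ is supported on the superdiagonal with blocks $\alpha_j(T_j-T_{j+1})$; the commutation thus hinges on the structural hypotheses at the interface of neighbouring diagonal entries. Granting $[A,N]=0$, Theorem~\ref{2.7} applies verbatim with $p=d$ and delivers that $S=A+N$ is an $(n+d)$-quasi-$(m+2d-2)$-isometry, which is exactly the desired conclusion.
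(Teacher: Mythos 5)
Your decomposition is exactly the one the paper uses: write $S=A+N$ with $A=T_1\oplus\cdots\oplus T_d$ block-diagonal and $N$ the strictly upper-triangular part carrying the blocks $\alpha_jI$, note that $N^d=0$, upgrade each $T_j$ to a common pair of parameters $(n,m)$ so that $A$ is an $n$-quasi-$m$-isometry, and then invoke Theorem~\ref{2.7}. Your handling of the first two ingredients (the direct sum decoupling of $\beta_{m,n}$, and the nilpotency of $N$) matches the paper and is fine.

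The obstacle you flag at the end is, however, a genuine gap --- and it is present in the paper's own proof as well, which applies Theorem~\ref{2.7} without ever verifying the commutation hypothesis. As you compute, $(AN-NA)_{j,j+1}=\alpha_j(T_j-T_{j+1})$, so $[A,N]=0$ holds only when, for each $j$, either $\alpha_j=0$ or $T_j=T_{j+1}$; nothing in the statement guarantees this. Theorem~\ref{2.7} genuinely requires $TQ=QT$ (its proof rests on the expansion of $\beta_q(T+Q)$ for commuting $T,Q$ from \cite{CM}, and the paper's own remark and example following that theorem stress that commutativity cannot be dropped), so the corollary as stated does not follow from it except under an additional hypothesis such as $T_1=\cdots=T_d$ or $\alpha_1=\cdots=\alpha_{d-1}=0$. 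In short: you reproduced the paper's argument and correctly located the missing step; the step cannot be ``granted'' in general, and the statement needs a further assumption for this route (or the paper's) to go through.
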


\begin{proof} Consider the two  operators

$$T=\left(
    \begin{array}{cccc}
      T_1& 0 & 0 &\cdots \\
      0 &  \ddots &  \ddots &\ddots \\
      \ddots &   \ddots &  \ddots  & 0 \\
      0 &  \ddots  & 0& T_d\\
    \end{array}
  \right)
\;\;\text{and}\;\;
Q=\left(
    \begin{array}{cccc}
      0& \alpha_1I & 0 &\cdots \\
      0 &  \ddots &  \ddots &\ddots \\
      \ddots &   \ddots &  \ddots  & \alpha_{d-1}I \\
      0 &  \ddots  & 0& 0\\
    \end{array}
  \right)\;\;\text{for}\;\;j=1,\cdots,d.$$ It is easy to see that
  ${ S}={ T}+{ Q}$. Observing that ${ T}$ is an $n$-quasi $m$-isometry tuple (by Corollary \ref{2.3}) where
  $n=\displaystyle\max_{1\leq j\leq d}\{n_j\}$ and $m=\displaystyle\max_{1\leq j\leq d}\{m_j\}$.
   Therefore $ Q $ is a $d$-nilpotent operator. According to Theorem \ref{2.7}, ${S}$ is a $(n+d)$-quasi-$(m+2d-2)$-isometry..
\end{proof}

\par \vskip 0.2 cm \noindent

The following theorem shows that the
class of $n$-quasi-$m$-isometry  is a closed subset of
${\mathcal L}({\mathcal H})$ equipped with the uniform operator (norm) topology.
\par \vskip 0.2 cm \noindent
\begin{theorem}\label{2.8} Let  $T \in {\mathcal L}({\mathcal H}).$
If $(T_k)_k $ is a sequence of $n$-quasi-$m$-isometry such that
$\displaystyle\lim_{k \to \infty }\|T_k- T\|= 0,$  then $T $ is also $n$-quasi-$m$-isometry.
\end{theorem}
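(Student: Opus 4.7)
The plan is to observe that $T$ being an $n$-quasi-$m$-isometry is exactly the vanishing of the operator
\[
\beta_{m,n}(T) = T^{*n}\bigg(\sum_{0\leq k\leq m}(-1)^{m-k}\binom{m}{k}T^{*k}T^{k}\bigg)T^{n},
\]
so the theorem reduces to showing that the map $T\mapsto \beta_{m,n}(T)$ is continuous from $\mathcal{L}(\mathcal{H})$ (with the norm topology) into itself. Once this is established, the hypothesis $\beta_{m,n}(T_k)=0$ for every $k$ forces $\beta_{m,n}(T)=\lim_{k\to\infty}\beta_{m,n}(T_k)=0$, and $T$ is an $n$-quasi-$m$-isometry.

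To verify the continuity, I would first note that since $\|T_k-T\|\to 0$, the sequence $(T_k)$ is uniformly bounded; set $M=\sup_k\|T_k\|+\|T\|<\infty$. The involution $A\mapsto A^{*}$ is an isometry, so $\|T_k^{*}-T^{*}\|\to 0$ as well. I would then use the standard telescoping identity
\[
A_1\cdots A_p-B_1\cdots B_p=\sum_{i=1}^{p}A_1\cdots A_{i-1}(A_i-B_i)B_{i+1}\cdots B_p,
\]
applied to each monomial $T_k^{*n}T_k^{*k}T_k^{k}T_k^{n}$ versus $T^{*n}T^{*k}T^{k}T^{n}$, to conclude that each monomial depends norm-continuously on the operator, with a bound dominated by a constant times $M^{2n+2k-1}\|T_k-T\|$. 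Summing finitely many such estimates weighted by $\binom{m}{k}$ yields $\|\beta_{m,n}(T_k)-\beta_{m,n}(T)\|\to 0$.

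Finally, combining $\beta_{m,n}(T_k)=0$ with the continuity estimate just established gives $\beta_{m,n}(T)=0$, which by \eqref{2.2} means that $T$ is an $n$-quasi-$m$-isometry. I do not foresee any real obstacle: the argument is entirely a routine continuity/telescoping computation, and the only mildly delicate point is to remember to use the uniform bound on $\|T_k\|$ (automatic from norm convergence) so that the products of the $T_k$'s stay controlled while one replaces them one factor at a time by the corresponding powers of $T$.
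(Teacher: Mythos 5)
Your proposal is correct and follows essentially the same route as the paper: both arguments reduce the statement to the norm-continuity of $T\mapsto\beta_{m,n}(T)$, proved by adding and subtracting intermediate products (the paper does this in two coarse steps, you via the full telescoping identity) together with the uniform bound on $\|T_k\|$ coming from norm convergence. No gaps.
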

\begin{proof}
Suppose that $(T_k)_k$ is a sequence of $n$-quasi-$m$-isometric operators such
that $$\displaystyle\lim_{k \to \infty}\|T_k -T\| = 0.$$  Since for every positive integer $k$. $T_k$ is an $n$-quasi-$m$-isometry, we have $\beta_{m,n}(T)=0$. It follows that
\begin{eqnarray}\label{norm}
&&\|\beta_{m,\;n}(T)\|=\|\beta_{m,\;n}(T_k)-\beta_{m,\;n}(T)\|\nonumber\\&=&\|T_k^{*n}\bigg(\sum_{0\leq j \leq m}(-1)^{m-j}\binom{m}{j}T_k^{*j}T_k^j\bigg)T_k^n-T^{*n}\bigg(\sum_{0\leq j \leq m}(-1)^{m-j}\binom{m}{j}T^{*j}T^j\bigg)T^n\|\nonumber\\
&\leq& \|T_k^{*n}\bigg(\sum_{0\leq j \leq m}(-1)^{m-j}\binom{m}{j}T_k^{*j}T_k^j\bigg)T_k^n-T_k^{*n}\bigg(\sum_{0\leq j \leq m}(-1)^{m-j}\binom{m}{j}T_k^{*j}T^j\bigg)T^n\|\nonumber\\&+&\|
T_k^{*n}\bigg(\sum_{0\leq j \leq m}(-1)^{m-j}\binom{m}{j}T_k^{*j}T^j\bigg)T^n-T^{*n}\bigg(\sum_{0\leq j \leq m}(-1)^{m-j}\binom{m}{j}T^{*j}T^j\bigg)T^n\|
\nonumber \\&\leq&\|\sum_{0\leq j \leq m}(-1)^{m-j}\binom{m}{j}T_k^{*n+j}\bigg(T_k^jT_k^n-T^jT^n\bigg)\|\nonumber
\end{eqnarray}
\begin{eqnarray}
&&+\|\sum_{0\leq j \leq m}(-1)^{m-j}\bigg(T_k^{*n+j}-T^{*n+j}\bigg)T^jT^n\|\nonumber\\&\leq&\sum_{0\leq j \leq m}\binom{m}{j}\|T_k^*\|^{n+j}\|T_k^jT_k^n-T^jT^n\|+\sum_{0\leq j \leq m}\binom{m}{k}\|T_k^{*n+j}-T^{n+j}\|\|T\|^{j+n}. \ \ \ \ \ \
\end{eqnarray}
Since the product of operators is sequentially continuous in the strong topology, one concludes that $T_k^jT_k^n$, $T_k^{n+j}$
converge strongly to $T^jT^n$  and $T^{n+j}$  respectively for $j=0,1,...,m$. Hence the limiting case of $(\ref{norm})$ shows that $T$ belongs to the class of $n$-quasi-$m$-isometric operators.
\end{proof}
\medskip
%We shall now consider the property of invariance with respect to conformal
%automorphisms of the unit disc.

%\begin{lemma}(\cite{OA})
%Let $n\geq 1$ be an integer, and let $T \in
%\mathcal{B}(\mathcal{H})$ an operator such that $r(T)\leq 1$. Then
%the following equality hold
%\begin{eqnarray*}
%&&\sum_{0\leq k\leq
%n}\binom{m}{k}\varphi_\alpha(T)^{*k}\varphi_\alpha(T)^k\\&=&(1-|\alpha|^2)^m\big(I-\alpha
%T^*\big)^{-m}\big(\sum_{0\leq k\leq
%n}(-1)^k\binom{m}{k}T^{*k}T^k\bigg)\big(I-\overline{\alpha}T\big)^{-m}
%\end{eqnarray*}
%holds for every conformal automorphism $\varphi_\alpha$ of the unit
%disc of the form
%$\varphi_\alpha(z)=\displaystyle\frac{z-\alpha}{1-\overline{\alpha}z}$
%for all $z \in \mathbb{D}$ and $\alpha \in \mathbb{D}.$
%\end{lemma}
%\par \vskip 0.2 cm \noindent
%Let $Aut(\mathbb{D})$ be the group of all conformal mapping from
%$\mathbb{D}$ onto itself ( also called disk automorphisms of
%$\mathbb{D}$). It is well known that $Aut(\mathbb{D})$ coincides
%5with the set of all M\"{o}bius transformations of $\mathbb{D}$ onto
%itself:
%$$5
%Aut(\mathbb{D})= \{\lambda \varphi_{\alpha} : |\lambda|=1, \alpha
%\in \mathbb{D} \}.
%$$
% \vspace {-2mm}
%\begin{remark} It was observed in \cite{OS} that the conformal automorphisms operate on the
%class of $m$-isometries.\end{remark}
% \vspace {-2mm}
%\begin{proposition}
%If $T \in \mathcal{L}(\mathcal{H})$ is an $n$-quasi-$m$-isometry, then so is
%$\varphi(T)$ for every $\varphi \in Aut(\mathbb{D}).$
%\end{proposition}
%\begin{proof}
%\end{proof}

\medskip
\section{$n$-quasi strict-$m$-isometries}
In this section  we introduce and study some properties of the class of $n$-quasi strict- $m$-isometric operators.\par \vskip 0.2 cm \noindent
Recall that an operator
 $T \in {\mathcal L}({\mathcal H})$ is said to be a strict $m$-isometry if $T$ is an $m$-isometry but it is not an $(m-1)$-isometry.
 \par\vskip 0.2 cm \noindent
\begin{definition}\label{3.1}
We say that $T \in {\mathcal L}({\mathcal H})$ is a  $n$-quasi strict $m$-isometry if $T$ is an $n$-quasi-$m$-isometry, but
$T$ is not an $n$-quasi-$(m -1$)-isometry.
\end{definition}
\par \vskip 0.2 cm \noindent
\begin{example}(\cite{SP}) Let $\big(e_k\big)_{k\geq 1}$
be an orthonormal basis of a Hilbert space ${\mathcal H}$. Consider
an  operator $T \in {\mathcal L}({\mathcal H})$ defined by: $$\left\{
                     \begin{array}{ll}
                      Te_1=ae_2& a\neq \sqrt{2}\\
                      Te_p= \sqrt{\frac{p+1}{p}}e_{p+1}& (p=2,3,...)
                     \end{array}
                   \right.$$
 A direct calculation shows that  $T$ is  a quasi-$2$-isometric, but not a quasi-isometric. Therefore $T$ is a quasi strict-$2$-isometry.
 \end{example}
\par \vskip 0.2 cm \noindent
 \begin{example}
 Consider the operator $T\in {\mathcal L}(\mathbb{C}^2)$ given by
 $T=\left(
 \begin{array}{cc}
        1 & 1 \\
        0 & 1 \\
      \end{array}
    \right) $
    who is quasi-$3$-isometric but is not quasi-$2$-isometric. Hence $T$ is a quasi strict-$3$-isometry.
\end{example}
\par \vskip 0.2 cm \noindent
\begin{remark}
It is proved in Corollary \ref{2.3} that an $n$-quasi-$m$-isometric operator is $n$-quasi-$k$-isometric operator
for all integers $k\geq  m.$ Hence if an $T \in {\mathcal L}({\mathcal H})$ is a strict $n$-quasi-$m$-isometry, then it is not a $n$-quasi-$k$-isometry for all integers
$1 \leq  k < m.$
\par \vskip 0.2 cm \noindent
\end{remark}
\par \vskip 0.2 cm \noindent Recall that the multinomial coefficients  is given by $\displaystyle\binom{k}{p_1,\cdots,p_k}=\frac{k!}{p_1! p_2!\cdots p_k!}$ where $k$ and $p_1,\cdots,p_k$ are nonnegative integers subject to $k = p_1+p_2+\cdots +p_k$.\par \vskip 0.2 cm \noindent We will use  the following formula for commuting variables
$z = (z_1, . . . , z_q):$  $$\big(z_1+\cdots+z_q)^k=\displaystyle\sum_{p_1+p_2+\cdots+p_q=k}\binom{k}{p_1,p_2,\cdots,p_q}z_1^{p_1}z_2^{p_2}.\cdots z_q^{p_q}.$$ In particular, if $z_1 = \cdots� = z_q = 1,$ we have
$$\sum_{p_1+p_2+\cdots+p_q=k}\binom{k}{p_1,p_2,\cdots,p_q}=q^k.$$\par \vskip 0.2 cm \noindent
\begin{proposition}\label{3.1}
Let $T\in {\mathcal L}({\mathcal H})$, the following statements hold. \par \vskip 0.2 cm \noindent $(1)$  If $m$ is a positive integer and $x\in {\mathcal H}$, then
\begin{equation}\label{3.1}
\Delta_{m,\;n}(T,x)=\Delta_{m-1,\;n}(T, Tx)-\Delta_{m-1,\;n}(T,x).
\end{equation}
In particular, if $T$ is an $n$-quasi-$m$-isometry, then  for every positive integer $k$ one has
\begin{equation}\label{3.2}
\Delta_{m-1,\;n}(T,T^kx)=\Delta_{m-1,\;n}(T,x).\end{equation}

 \par \vskip 0.2 cm \noindent $(2)$ If $k$ is a positive integer and $x\in {\mathcal H}$, then
\begin{equation}\label{3.3}
\Delta_{m,\;n}(T^k,x)=\sum_{p_1+\cdots+p_k=m}\binom{m}{p_1,\cdots,p_k}\Delta_{m,\;n}\big(T,T^{(0.p_1+1.p_2+\cdots+(k-1)p_k)+(k-1)n}x\big).
\end{equation}
\end{proposition}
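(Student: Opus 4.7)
The plan is to prove part $(1)$ by a direct Pascal-rule calculation and to prove part $(2)$ by reducing to the scalar polynomial identity $(z^k-1)^m=(z-1)^m(1+z+\cdots+z^{k-1})^m$ and then reinterpreting each monomial $z^N$ as a norm of an iterate of $T$.

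For the identity (\ref{3.1}), I would start from the definition (\ref{2.3}). In the expansion of $\Delta_{m-1,n}(T,Tx)$ one has $\|T^{k+n}(Tx)\|^2=\|T^{(k+1)+n}x\|^2$, so after the reindexing $k\mapsto k-1$ in that sum and subtracting $\Delta_{m-1,n}(T,x)$ one obtains
$$\Delta_{m-1,n}(T,Tx)-\Delta_{m-1,n}(T,x)=\sum_{k=0}^{m}(-1)^{m-k}\left[\binom{m-1}{k-1}+\binom{m-1}{k}\right]\|T^{k+n}x\|^2,$$
with binomials of out-of-range indices read as $0$. Pascal's identity $\binom{m-1}{k-1}+\binom{m-1}{k}=\binom{m}{k}$ collapses the bracket to $\binom{m}{k}$, giving exactly $\Delta_{m,n}(T,x)$. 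The iterated identity (\ref{3.2}) is then an immediate telescoping consequence: if $T$ is $n$-quasi-$m$-isometric, then $\Delta_{m,n}(T,y)=0$ for every $y\in\mathcal{H}$, so applying (\ref{3.1}) with $x$ replaced successively by $T^{i-1}x$ gives $\Delta_{m-1,n}(T,T^ix)=\Delta_{m-1,n}(T,T^{i-1}x)$, and chaining these equalities for $i=1,\dots,k$ yields (\ref{3.2}).

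For part $(2)$, the key starting observation is that $\|(T^k)^{j+n}x\|^2=\|T^{k(j+n)}x\|^2=\|T^{kj+kn}x\|^2$, so
$$\Delta_{m,n}(T^k,x)=\sum_{j=0}^m(-1)^{m-j}\binom{m}{j}\|T^{kj+kn}x\|^2.$$
Using the multinomial formula recalled just before the proposition,
$$\bigl(1+z+\cdots+z^{k-1}\bigr)^m=\sum_{p_1+\cdots+p_k=m}\binom{m}{p_1,\ldots,p_k}\,z^{\,0\cdot p_1+1\cdot p_2+\cdots+(k-1)p_k},$$
and multiplying this by $(z-1)^m=\sum_{j=0}^m(-1)^{m-j}\binom{m}{j}z^j$ yields the polynomial identity $(z^k-1)^m=(z-1)^m(1+z+\cdots+z^{k-1})^m$, whose left-hand side expands as $\sum_{j=0}^m(-1)^{m-j}\binom{m}{j}z^{kj}$.

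Matching coefficients of $z^N$ on both sides and substituting $z^N\leftrightarrow\|T^{N+kn}x\|^2$ --- which is justified by the linearity rule that if $C(z)=A(z)B(z)$ then for any sequence $(c_N)$, $\sum_N[z^N]C(z)\,c_N=\sum_{N_1,N_2}[z^{N_1}]A(z)\,[z^{N_2}]B(z)\,c_{N_1+N_2}$ --- converts the left-hand side into $\Delta_{m,n}(T^k,x)$ and the right-hand side into
$$\sum_{p_1+\cdots+p_k=m}\binom{m}{p_1,\ldots,p_k}\sum_{j=0}^m(-1)^{m-j}\binom{m}{j}\|T^{j+kn+\alpha}x\|^2,$$
where $\alpha=\sum_{i=1}^k(i-1)p_i$. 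Rewriting the exponent as $j+kn+\alpha=(j+n)+((k-1)n+\alpha)$ identifies the inner sum with $\Delta_{m,n}\bigl(T,T^{(k-1)n+\alpha}x\bigr)$, which is exactly (\ref{3.3}). The main obstacle is notational rather than conceptual: the argument is morally the forward-shift identity $(E^k-I)^m=(E-I)^m(I+E+\cdots+E^{k-1})^m$ applied to the sequence $\bigl(\|T^ix\|^2\bigr)_i$ at index $kn$, and the care is required only to keep the multinomial indexing unambiguous.
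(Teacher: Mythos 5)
Your proof is correct and follows essentially the same route as the paper: part $(1)$ via Pascal's identity $\binom{m}{k}=\binom{m-1}{k}+\binom{m-1}{k-1}$ followed by telescoping for (\ref{3.2}), and part $(2)$ by reducing to the scalar identity $(z^k-1)^m z^{kn}=(z-1)^m(1+z+\cdots+z^{k-1})^m z^{kn}$ through the multinomial theorem. The only (welcome) difference is that you make explicit the linearity argument transferring the polynomial identity to the identity in $\|T^Nx\|^2$, which the paper leaves implicit.
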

\begin{proof} For $m\geq 1$ and $x\in {\mathcal H}$ we have
\begin{eqnarray*}
\Delta_{m,\;n}(T,x)&=& \sum_{0\leq k\leq m}(-1)^{m-k}\binom{m}{k}\|T^{k+n}x\|^2\\&=&
(-1)^m\|T^nx\|^2+\sum_{1\leq k\leq m}(-1)^{m-1-k}\binom{m}{k}\|T^{k+n}x\|^2+\|T^{m+n}x\|^2
\\&=&
(-1)^m\|T^nx\|^2+\sum_{1\leq k\leq m}(-1)^{m-1-k}\bigg(\binom{m-1}{k}+\binom{m-1}{k-1}\bigg)\|T^{k+n}x\|^2+\|T^{m+n}x\|^2\\&=&
\sum_{0\leq k\leq m-1}(-1)^{m-1-k}\binom{m-1}{k}\|T^{k+n}Tx\|^2-\sum_{0\leq k\leq m-1}(-1)^{m-1-k}\binom{m-1}{k}\|T^{k+n}x\|^2\\&=&\Delta_{m-1,\;n}(T, Tx)-\Delta_{m-1,n}(T,x).
\end{eqnarray*}
Hence $(\ref{3.1})$ is proved.\par \vskip 0.2 cm \noindent  If we assume that  $T$ is an $n$-quasi-$m$-isometry, then  $\Delta_{m,\;n}(T,x)=0$ and so that
$$\Delta_{m-1,\;n}(T,Tx)=\Delta_{m-1,\;n}(T,x).$$
Hence, $$\Delta_{m-1,\;n}(T,T^kx)=\Delta_{m-1,\;n}(T,T^{k-1}x)=\cdots =\Delta_{m-1,\;n}(T,x).$$
\par\vskip 0.2 cm \noindent $(2)$  From  $(2.3)$ we have
\begin{equation}
\Delta_{m,\;n}(T^k,x)=\sum_{0\leq j \leq m}(-1)^{m-j}\binom{m}{j}\|\big(T^k\big)^{j+n}x\|^2
\end{equation}
and
\begin{eqnarray*}
&&\sum_{p_1+\cdots+p_k=m}\binom{m}{p_1,\cdots,p_k}\Delta_{m,\;n}\big(T,T^{(0.p_1+1.p_2+\cdots+(k-1)p_k)+(k-1)n}x\big)\\&=&
\sum_{p_1+\cdots+p_k=m}\binom{m}{p_1,\cdots,p_k}\bigg(\sum_{0\leq j\leq m}(-1)^{m-j}\binom{m}{j}\|T^{j+kn}T^{(0.p_1+1.p_2+\cdots+(k-1)p_k)}x\|^2\bigg).
\end{eqnarray*}
Thus we need to prove the following
combinatorial identity:

\begin{eqnarray*}&&\sum_{0\leq j\leq m}(-1)^{m-j}\binom{m}{j}(z^k)^{j+n}\\&=&\sum_{p_1+\cdots+p_k=m}\binom{m}{p_1,\cdots,p_k}\bigg(\sum_{0\leq j\leq m}(-1)^{m-j}\binom{m}{j}z^{j+kn}z^{(0.p_1+1.p_2+\cdots+(k-1)p_k)}\bigg).\end{eqnarray*}
In fact, observe that
$$\sum_{0\leq j\leq m}(-1)^{m-j}\binom{m}{j}(z^k)^{j+n}=\big(z^k-1\big)^mz^{kn}$$ and
\begin{eqnarray*}&&\sum_{p_1+\cdots+p_k=m}\binom{m}{p_1,\cdots,p_k}\bigg(\sum_{0\leq j\leq m}(-1)^{m-j}\binom{m}{j}z^{j+kn}z^{(0.p_1+1.p_2+\cdots+(k-1)p_k)}\bigg)\\&=&
z^{kn}\sum_{p_1+\cdots+p_k=m}\binom{m}{p_1,\cdots,p_k}\bigg(\sum_{0\leq j\leq m}(-1)^{m-j}\binom{m}{j}z^{j}\bigg)z^{(0.p_1+1.p_2+\cdots+(k-1)p_k)}.
\end{eqnarray*}
By applying
the multinomial formula in reverse order, we have
\begin{eqnarray*}
&&\sum_{p_1+\cdots+p_k=m}\binom{m}{p_1,\cdots,p_k}\bigg(\sum_{0\leq j\leq m}(-1)^{m-j}\binom{m}{j}z^{j}\bigg)z^{(0.p_1+1.p_2+\cdots+(k-1)p_k)}\\&=&
\big(z-1\big)^m\sum_{p_1+\cdots+p_k=m}\binom{m}{p_1,\cdots,p_k}z^{(0.p_1+1.p_2+\cdots+(k-1)p_k)}\\&=&
\big(z-1\big)^m\big(z^{k-1}+z^{k-2}+\cdots+z+1)^m=\big(z^k-1)^m.
\end{eqnarray*}
\end{proof}

\begin{theorem}\label{3.1}
If $T \in {\mathcal L}({\mathcal H})$  is a $n$-quasi strict $m$-isometry, then for any positive integer $k$, $T^k$ is a $n$-quasi strict $m$-isometry. Furthermore
$$\Delta_{m-1,\;n}(T^k,x)=k^{m-1}\Delta_{m-1,\;n}(T,x).$$
\end{theorem}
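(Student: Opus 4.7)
The plan is to reduce everything to Proposition \ref{3.1}. Since Theorem \ref{2.3} already guarantees that $T^k$ inherits the $n$-quasi-$m$-isometry property from $T$, what remains is the identity
\[
\Delta_{m-1,\,n}(T^k,x)=k^{m-1}\,\Delta_{m-1,\,n}(T,x),
\]
which will automatically imply strictness: if $T^k$ were an $n$-quasi-$(m-1)$-isometry, the left-hand side would vanish for every $x$, forcing $\Delta_{m-1,\,n}(T,x)=0$ for every $x$ (since $k^{m-1}\neq 0$), contradicting that $T$ is a strict $n$-quasi-$m$-isometry.

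To establish the identity, I would apply part $(2)$ of Proposition \ref{3.1} \emph{with $m$ replaced by $m-1$}. This yields
\[
\Delta_{m-1,\,n}(T^k,x)=\sum_{p_1+\cdots+p_k=m-1}\binom{m-1}{p_1,\ldots,p_k}\Delta_{m-1,\,n}\!\left(T,\,T^{(0\cdot p_1+1\cdot p_2+\cdots+(k-1)p_k)+(k-1)n}x\right).
\]
Observe that the exponent $(0\cdot p_1+\cdots+(k-1)p_k)+(k-1)n$ is a nonnegative integer for every multi-index appearing in the sum, and it is strictly positive whenever $k\ge 2$ (as $(k-1)n\ge 1$); the case $k=1$ is trivial.

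Next, I would invoke part $(1)$ of Proposition \ref{3.1}, specifically the consequence \eqref{3.2}: since $T$ is an $n$-quasi-$m$-isometry,
\[
\Delta_{m-1,\,n}(T,T^{j}x)=\Delta_{m-1,\,n}(T,x)\qquad\text{for every nonnegative integer }j.
\]
This collapses every summand to $\Delta_{m-1,\,n}(T,x)$, which factors out of the sum. What is left is the purely combinatorial identity
\[
\sum_{p_1+\cdots+p_k=m-1}\binom{m-1}{p_1,\ldots,p_k}=k^{m-1},
\]
obtained by specializing the multinomial formula recalled just before Proposition \ref{3.1} at $z_1=\cdots=z_k=1$. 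Combining these three ingredients delivers the claimed formula, and then the strictness of $T^k$ follows from the argument in the opening paragraph.

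The whole proof is essentially a bookkeeping exercise built on top of Proposition \ref{3.1}; there is no single hard step, but the one place where care is required is making sure that the substitution of \eqref{3.2} is legitimate for every multi-index in the multinomial sum, i.e., that the exponent of $T$ applied to $x$ is a well-defined nonnegative integer so that \eqref{3.2} (or its trivial $j=0$ instance) indeed applies.
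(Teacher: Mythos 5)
Your argument is correct and follows essentially the same route as the paper: Theorem \ref{2.3} for the $n$-quasi-$m$-isometry property of $T^k$, then identity \eqref{3.3} with $m-1$ in place of $m$, collapsed via \eqref{3.2} and the multinomial evaluation $\sum\binom{m-1}{p_1,\dots,p_k}=k^{m-1}$, with strictness read off from the resulting formula. Nothing further is needed.
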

\begin{proof}
Since $T$ is a $n$-quasi strict $m$-isometry, by Theorem \ref{2.3}, $T^k$ is an $n$-quasi-$m$-isometry. Furthermore, by (\ref{3.2}) and (\ref{3.3})   we get
\begin{eqnarray*}
\Delta_{m-1,n}(T^k,x)&=&\sum_{p_1+\cdots+p_k=m-1}\binom{m-1}{p_1,\cdots,p_k}\Delta_{m-1,n}(T, T^{(0.p_1+1.p_2+\cdots+(k-1)p_k)+(k-1)n}x)\\&=&
\sum_{p_1+\cdots+p_k=m-1}\binom{m-1}{p_1,\cdots,p_k}\Delta_{m-1,n}(T,x)\\&=&k^{m-1}\Delta_{m-1,\;n}(T,x).
\end{eqnarray*}
Consequently, $T^k$ is a quasi strict-$m$-isometry.
\end{proof}
\par \vskip 0.2 cm \noindent
\begin{remark}
The converse of the above theorem is not true in general. In fact, by  Theorem \ref{2.4}  if  $T^r$ and $T^s$ are $n$-quasi-$m$-isometries for two coprime
positive integers $r$ and $s$, then $T$ is an  $n$-quasi-$m$-isometry.
\end{remark}
Recall that a  sequence $(a_j)_{j\geq 0}$  in a group $G$ is an
arithmetic progression of order $h$ if
$$\sum_{0\leq k \leq h+1}(-1)^{h+1-k}a_{k+j}=0$$ for any $j\geq0.$ An arithmetic progression of order $h$ is of strict order $h$ if $h = 0$ or if $h \geq 1$ and it is not of
order $h -1.$  We refer the interested reader to \cite{TBMN}  for complete details.
\begin{lemma} \label{3.1} (\cite[Theorem 4.1]{TBMN})
 Let $a = (a_j)_{j\geq 0}$ be a numerical sequence. Suppose that $(a_{cj})_{j\geq }$ is an arithmetic progression of strict order $h$ and $(a_{dj})_{j\geq 0}$ is an arithmetic
progressions of strict order $k \geq 0$, for $ c, d \geq  1$ and $h, k \geq 0.$ Then $(a_{ej})_{j\geq 0}$ is
an arithmetic progression of strict order $l$, being $e$ the greatest common divisor
of $c$ and $d$, and $l$ the minimum of $h$ and $k$.
 \end{lemma}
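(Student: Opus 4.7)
The plan is to translate both hypotheses into polynomial language via Lemma~\ref{lem22} and then leverage the coprimality of $c/e$ and $d/e$ to extract the polynomial structure governing the subsequence $(a_{ej})_{j\geq 0}$. First I would reduce to the case $\gcd(c,d) = 1$: write $c = e c'$ and $d = e d'$ with $\gcd(c', d') = 1$, and set $b_j := a_{e j}$. The assumed arithmetic progressions on $(a_{cj})$ and $(a_{dj})$ then become arithmetic progressions of strict orders $h$ and $k$ on the subsequences $(b_{c' j})_{j \geq 0}$ and $(b_{d' j})_{j \geq 0}$, so the task reduces to showing that $(b_j)_{j \geq 0}$ is an arithmetic progression of strict order $\min(h,k)$.

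Next, Lemma~\ref{lem22} furnishes polynomials $P$ of degree exactly $h$ and $Q$ of degree exactly $k$ with $b_{c' j} = P(j)$ and $b_{d' j} = Q(j)$ for all $j \geq 0$. Comparing values at the common indices $c' d' j$ yields $P(d' j) = Q(c' j)$ for every $j$, which as a polynomial identity in $j$ forces $\deg P = \deg Q$ and hence $h = k =: l$; when $h \neq k$ the two hypotheses are incompatible and the conclusion is vacuous. I then construct the candidate polynomial $R$ of degree $\leq l$ via Lagrange interpolation through the prescribed data $R(c' i) = P(i)$ for $i = 0, 1, \ldots, l$. Using the compatibility $P(d' t) = Q(c' t)$ together with the coprimality of $c'$ and $d'$, I would verify that $R(d' j) = Q(j)$ as well, so that $R$ simultaneously interpolates both sparse subsequences.

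The hardest step is to show that $b_n = R(n)$ at every index $n \geq 0$, not only at multiples of $c'$ or $d'$. This is what promotes the sparse polynomial interpolation to the recurrence $\sum_{0 \leq i \leq l+1}(-1)^{l+1-i}\binom{l+1}{i} b_{n+i} = 0$ for all $n \geq 0$. The crucial input is the coprimality $\gcd(c', d') = 1$: by Frobenius, every sufficiently large $n$ admits a representation $n = \alpha c' + \beta d'$ with $\alpha, \beta \geq 0$, and by combining this with the shift-invariance of the arithmetic-progression relations on the two sparse subsequences and an induction on $n$, one should be able to propagate the polynomial identity $b_n = R(n)$ to all indices. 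Strictness of the order $l$ then follows from $\deg R = l$. The main obstacle is precisely this propagation: the hypotheses only directly constrain $b_n$ on sparse index sets, so the Bezout-type combinatorial bridge between the multiples of $c'$ and the multiples of $d'$ is where the real work lies, and it is the step that I expect to require the most care.
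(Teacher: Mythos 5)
The paper offers no proof of this lemma at all --- it is imported verbatim as \cite[Theorem 4.1]{TBMN} --- so there is nothing internal to compare you against; I am judging your argument on its own terms. Your opening moves are sound: the reduction to $\gcd(c',d')=1$, the translation via Lemma \ref{lem22} into polynomials $P$, $Q$ of exact degrees $h$, $k$, and the observation that $P(d'j)=Q(c'j)$ for all $j$ forces $h=k$ (so that the ``minimum'' in the conclusion is in fact redundant) are all correct.

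The gap is the step you yourself single out as the hardest, and it is not merely hard --- it is impossible from the hypotheses as literally stated. The assumption that $(b_{c'j})_{j\ge 0}$ and $(b_{d'j})_{j\ge 0}$ are arithmetic progressions constrains $b$ \emph{only} at multiples of $c'$ and at multiples of $d'$; an index $n$ coprime to $c'd'$ is subject to no constraint whatsoever, so no induction or Frobenius-type representation $n=\alpha c'+\beta d'$ can reach it. Concretely, take $c=2$, $d=3$, $e=1$, and set $a_j=j$ whenever $2\mid j$ or $3\mid j$ and $a_j=17$ otherwise: both $(a_{2j})_j$ and $(a_{3j})_j$ are arithmetic progressions of strict order $1$, yet $(a_j)_j$ is not an arithmetic progression of order $1$. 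The ``shift-invariance of the arithmetic-progression relations'' that you invoke is exactly the hypothesis that is missing from the statement: what you actually need --- and what is genuinely available in the one place the lemma is used (Theorem 3.2, where $a_j=\|T^jx\|^2$ and applying the defining identity of $T^r$ to the vector $T^jx$ yields the relation at \emph{every} offset $j$) --- is that $(a_{ci+s})_{i\ge 0}$ is an arithmetic progression of order $h$ for every $s\ge 0$, and likewise for $d$. Once the hypotheses are upgraded in this way your propagation can be carried out, but there is a cleaner finish: writing $E$ for the forward shift on one-sided sequences, the upgraded hypotheses read $(E^c-I)^{h+1}a=0$ and $(E^d-I)^{k+1}a=0$; since $\gcd\bigl((x^c-1)^{h+1},(x^d-1)^{k+1}\bigr)=(x^e-1)^{l+1}$ in $\mathbb{C}[x]$ with $l=\min\{h,k\}$, a Bezout identity gives $(E^e-I)^{l+1}a=0$, and strictness then follows from your degree computation $h=k=l$. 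As it stands, your proposal identifies the right obstacle but does not overcome it, and cannot without amending the hypotheses.
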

\begin{theorem}\label{3.2}
Let $T\in {\mathcal L}({\mathcal H})$ and $r, s,n,m,l$ be positive integers. If $T^r$
is a $n$-quasi strict $m$-isometry and $T^s$
is an $n$-quasi strict $l$-isometry, then $T^q$
is an $n$-quasi-$p$-isometry,
where $q$ is the greatest common divisor of $r$ and $s$, and $p$ is the minimum of $m$ and $l.$
\end{theorem}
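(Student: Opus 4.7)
The plan is to reduce the problem to a scalar-sequence statement and invoke the arithmetic-progression result in Lemma 3.1. Fix an arbitrary $x\in {\mathcal H}$ and set $a_j:=\|T^j x\|^2$ for $j\ge 0$. By Proposition \ref{prop21} applied to $T^r$, the hypothesis that $T^r$ is an $n$-quasi-$m$-isometry translates to the assertion that the subsequence $(a_{rk})_{k\ge n}$ coincides with a polynomial in $k$ of degree at most $m-1$; the strict hypothesis upgrades this degree to exactly $m-1$ for an appropriate choice of $x$. Analogously, $(a_{sk})_{k\ge n}$ coincides with a polynomial of degree at most $l-1$, strictly $l-1$ for some $x$.

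To fit the hypotheses of Lemma 3.1, which requires the arithmetic-progression conditions to hold from index $0$, I would shift to a common base point. Choose a common multiple $M$ of $r$ and $s$ with $M\ge n\max(r,s)$ (for instance $M=rsn/q$, which is divisible by $r$, by $s$, and by $q$) and set $b_j:=a_{j+M}$. Then $(b_{rj})_{j\ge 0}$ and $(b_{sj})_{j\ge 0}$ become arithmetic progressions, starting at $j=0$, of strict orders $m-1$ and $l-1$ respectively. Lemma 3.1 with $c=r$, $d=s$, $h=m-1$, $k=l-1$ then yields that $(b_{qj})_{j\ge 0}=(a_{qj+M})_{j\ge 0}$ is an arithmetic progression of strict order $p-1=\min(m,l)-1$, i.e., $a_{q\ell}$ is a polynomial in $\ell$ of degree $\le p-1$ on the tail $\ell\ge M/q$. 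Reading this back through Proposition \ref{prop21} applied to $T^q$ now gives that $T^q$ is an $n$-quasi-$p$-isometry.

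The principal obstacle is the index bookkeeping: Lemma 3.1 is phrased for arithmetic progressions beginning at index $0$, while Proposition \ref{prop21} produces only a tail condition, and the polynomial identity obtained for $T^q$ lives a priori on the tail $\ell\ge M/q$ rather than on $\ell\ge n$. One can bridge this gap by substituting $T^{qj_0}x$ for $x$ in the argument above for varying $j_0\ge 0$ and assembling the resulting identities, or by directly appealing to Theorem 2.4 (which immediately delivers the non-strict conclusion, since the strict hypothesis is \emph{a fortiori} non-strict). The route through Lemma 3.1 is preferable because, in parallel with the strict conclusion of Lemma 3.1 itself, it yields the sharper refinement that $T^q$ is in fact $n$-quasi strict $p$-isometric.
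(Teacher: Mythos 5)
Your proposal follows the same route as the paper: fix $x$, set $a_j=\|T^jx\|^2$, convert the two hypotheses into arithmetic-progression conditions of strict orders $m-1$ and $l-1$ on subsequences, and invoke Lemma 3.1 (the Berm\'udez--Martin\'on--Noda gcd theorem for arithmetic progressions). You are in fact more careful than the paper on the one delicate point: Lemma 3.1 needs the two subsequences to be $(a_{cj})_{j\ge 0}$ and $(a_{dj})_{j\ge 0}$ of a \emph{common} sequence sampled from a common origin, whereas the hypotheses give progressions along the indices $rj+rn$ and $sj+sn$; the paper applies the lemma to $(a_{r(j+n)})$ and $(a_{s(j+n)})$ without comment, while you re-base at a common multiple $M$. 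Two caveats. First, your proposed repair of the resulting tail problem does not work as described: substituting $T^{qj_0}x$ for $x$ only shifts indices upward, so it cannot recover the defining identity at the starting index $qn<M$; what the re-based argument actually delivers is $\sum_{0\le k\le p}(-1)^{p-k}\binom{p}{k}\|T^{qk+M}x\|^2=0$, i.e.\ that $T^q$ is an $(M/q)$-quasi-$p$-isometry, which is weaker than the $n$-quasi conclusion. Your fallback of citing Theorem 2.4 is, however, entirely adequate for the theorem \emph{as stated}, since the statement only claims the non-strict conclusion and strict hypotheses are a fortiori non-strict; the paper's own proof is the one that needs the tail issue resolved, because it asserts the strict conclusion. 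Second, the strict refinement you advertise (and which the paper's proof also claims) requires a single vector $x$ witnessing strict order $m-1$ for the $r$-subsequence and strict order $l-1$ for the $s$-subsequence simultaneously; neither your sketch nor the paper establishes the existence of such a common witness.
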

\begin{proof}
Fix $x\in {\mathcal H}$ and set $a_j:=\|T^jx\|^2$ for $j=1,2,\cdots$. Since $T^r$ is a $n$-quasi strict $m$-isometry, it follows that  $(a_{r(j+n)})_{j\geq 0}$ is an arithmetic progression of strict order $m-1$ satisfies the
recursive equation
$$\sum_{0\leq k\leq m}(-1)^{m-k}\binom{m}{k}a_{r(k+n)+j}=0 \;\;\quad \text{for all}\;\;j\geq 0.$$
Analogously, as $T^s$ is a $n$-quasi strict-$l$-isometry, it follows that $(a_{s{(j+n)}})_{j\geq 0}$  is an arithmetic progression of strict order $l-1$ satisfies the recursive equation
$$\sum_{0\leq k\leq l}(-1)^{l-k}\binom{l}{k}a_{s(k+n)+j}=0\;\;\quad \text{for all}\;\;j\geq 0.$$
 Applying Lemma \ref{3.1} it results that
$(a_{q{(j+n)}})_{j\geq 0}$ is an arithmetic progression of strict order $p-1$, so $T^q$
is an $n$-quasi-strict $p$-isometry, where $q = gcd(r,s )$ and $p =\min\{m,l\}$.
\end{proof}
The following corollary shows that
if we assume that two suitable different powers of $T$ are $n$-quasi strict-$m$-isometries,
then we obtain that $T$ is a $n$-quasi strict-$m$-isometry.
\par\vskip 0.2 cm \noindent
\begin{corollary}\label{2.4}
Let $T \in {\mathcal L}({\mathcal H})$ and  $r, s,m,n,l$ be
positive integers. The following properties hold. \par \vskip 0.2 cm \noindent $(1)$
If $T$ is an $n$-quasi strict-$m$-isometry such that $T^s$ is an $n$-quasi strict-isometry,

then $T$ is an $n$-quasi strict-isometry.\par \vskip 0.2 cm \noindent $(2)$
 If $T^r$ and $T^{r+1}$ are $n$-quasi strict-$m$-isometries, then so is $T$.\par \vskip 0.2 cm \noindent $(3)$
 If $T^r$ is an $n$-quasi strict-$m$-isometry and $T^{r+1}$ is an $n$-quasi strict-$l$-isometry

  with
$m < l$, then $T$ is an $n$-quasi strict-$m$-isometry.
\end{corollary}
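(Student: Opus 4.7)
My plan is to deduce all three statements as immediate consequences of Theorem \ref{3.2}, exactly as was done for the non-strict analogue (Corollary \ref{2.4} in Section 2) from Theorem \ref{2.4}. The key observation is that in each of (1), (2), (3) the two powers of $T$ under consideration have coprime exponents, so the greatest common divisor $q$ in Theorem \ref{3.2} equals $1$, forcing the relevant conclusion to be about $T = T^{1}$ itself.

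First I would handle (1). Here $T = T^{1}$ is an $n$-quasi strict $m$-isometry and $T^{s}$ is an $n$-quasi strict $1$-isometry. Applying Theorem \ref{3.2} with $r = 1$, $s$, and the strict orders $m$ and $l = 1$, one has $q = \gcd(1,s) = 1$ and $p = \min\{m,1\} = 1$, so $T^{q} = T$ is an $n$-quasi strict $1$-isometry. For (2), the inputs are $T^{r}$ and $T^{r+1}$, both $n$-quasi strict $m$-isometries; Theorem \ref{3.2} gives $q = \gcd(r,r+1) = 1$ and $p = \min\{m,m\} = m$, so $T$ is an $n$-quasi strict $m$-isometry. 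Finally, for (3), with $T^{r}$ of strict order $m$ and $T^{r+1}$ of strict order $l$ where $m < l$, again $\gcd(r,r+1) = 1$ and $\min\{m,l\} = m$, so Theorem \ref{3.2} yields that $T$ is an $n$-quasi strict $m$-isometry.

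Since each assertion is obtained by a single application of Theorem \ref{3.2} to a pair of coprime exponents, I do not expect any genuine obstacle. The only subtle point to double-check is that the strict order is preserved in the conclusion of Theorem \ref{3.2} (as opposed to merely the non-strict $n$-quasi-$p$-isometry property), because one needs that strictness in order to rule out lower-order isometry behaviour of $T$ in each statement. This is exactly the content of Lemma \ref{3.1} used inside the proof of Theorem \ref{3.2}, so invoking that theorem as a black box suffices.
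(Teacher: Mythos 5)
Your proposal is correct and takes essentially the same route as the paper, whose entire proof is the single line that the corollary is an immediate consequence of Theorem \ref{3.2}; your explicit observation that the exponents in each case are coprime (so $q=\gcd=1$) and that strictness is carried through Theorem \ref{3.2} via the arithmetic-progression lemma is exactly the intended reading.
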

\begin{proof}

 The proof is an immediate consequence of Theorem \ref{3.2}.
\end{proof}
\begin{theorem}\label{3.3}
Let $T\in {\mathcal L}({\mathcal H})$ and $S\in {\mathcal L}({\mathcal H})$ be doubly commuting operators. If $T$ is an $n$-quasi strict $m$-isometry and $S$ is an $n$-quasi strict-$l$-isometry, then $TS$ is a $n$-quasi strict-$(m+l-1)$-isometry if and only if $$\big(T^*\big)^{n+l-1}\beta_{m-1}(T)T^{n+l-1}S^{*n}\beta_{l-1}(S)S^{n}\not=0.$$
\end{theorem}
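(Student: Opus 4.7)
The plan is to reduce the strictness question to the non-vanishing of a single term in the expansion of $\beta_{m+l-2,\,n}(TS)$. By Theorem~\ref{th26}, $TS$ is already known to be an $n$-quasi-$(m+l-1)$-isometry, so it is a \emph{strict} such operator precisely when $\beta_{m+l-2,\,n}(TS)\neq 0$. The whole proof therefore reduces to computing $\beta_{m+l-2,\,n}(TS)$ explicitly and showing that, under the doubly commuting and strictness hypotheses, it collapses to the claimed product.

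First I would apply the commuting binomial identity of \cite[Lemma~12]{CG} (already exploited in the proof of Theorem~\ref{th26}) to write
\[
\beta_{m+l-2}(TS)=\sum_{0\leq j\leq m+l-2}\binom{m+l-2}{j}(T^{\ast})^{j}\beta_{m+l-2-j}(T)\,T^{j}\,\beta_{j}(S),
\]
and then sandwich with $(TS)^{\ast n}=(T^{\ast})^{n}(S^{\ast})^{n}$ on the left and $(TS)^{n}=T^{n}S^{n}$ on the right. Because $(T,S)$ is doubly commuting, every word in $T,T^{\ast}$ commutes with every word in $S,S^{\ast}$, so I can move $(S^{\ast})^{n}$ past all $T$-factors and $T^{n}$ past $\beta_{j}(S)$ to rearrange the expression as
\[
\beta_{m+l-2,\,n}(TS)=\sum_{0\leq j\leq m+l-2}\binom{m+l-2}{j}(T^{\ast})^{j}\bigl[(T^{\ast})^{n}\beta_{m+l-2-j}(T)T^{n}\bigr]T^{j}\bigl[(S^{\ast})^{n}\beta_{j}(S)S^{n}\bigr].
\]

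The next step is to eliminate the vanishing terms. By Corollary~\ref{cor23}, $T$ is an $n$-quasi-$k$-isometry for every $k\geq m$, so $(T^{\ast})^{n}\beta_{k}(T)T^{n}=0$ whenever $k\geq m$; symmetrically $(S^{\ast})^{n}\beta_{j}(S)S^{n}=0$ for every $j\geq l$. Consequently the $j$-th term in the sum above vanishes as soon as $j\geq l$ (the $S$-bracket is zero) or $m+l-2-j\geq m$, i.e.\ $j\leq l-2$ (the $T$-bracket is zero). Only the index $j=l-1$ escapes both obstructions, and for it $m+l-2-j=m-1$, giving
\[
\beta_{m+l-2,\,n}(TS)=\binom{m+l-2}{l-1}(T^{\ast})^{n+l-1}\beta_{m-1}(T)\,T^{n+l-1}\,(S^{\ast})^{n}\beta_{l-1}(S)\,S^{n}.
\]
Hence $\beta_{m+l-2,\,n}(TS)\neq 0$ if and only if the right-hand side is non-zero, which is exactly the equivalence in the statement.

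The only genuine obstacle is bookkeeping: one must keep the $T$- and $S$-factors carefully separated when pushing $(T^{\ast})^{n}$ and $(S^{\ast})^{n}$ inside the binomial sum, which uses the doubly-commuting hypothesis at each step. Once the expansion is arranged so that $T$-brackets and $S$-brackets appear independently, the two strictness-style conclusions of Corollary~\ref{cor23} (applied to $k\geq m$ and $j\geq l$ respectively) eliminate every term but one, and the stated equivalence follows immediately.
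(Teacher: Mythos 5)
Your proposal is correct and follows essentially the same route as the paper: expand $\beta_{m+l-2}(TS)$ via the doubly-commuting product formula, sandwich with $(TS)^{*n}$ and $(TS)^{n}$, kill the terms with $j\le l-2$ and $j\ge l$ using Corollary~\ref{cor23}, and isolate the single surviving $j=l-1$ term. The only cosmetic difference is the citation for the expansion (the paper invokes the identity from Gu--Stankus rather than \cite[Lemma 12]{CG}), and your write-up actually states the final equivalence with the correct sign ($\neq 0$), where the paper's last displayed line contains a typo.
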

\begin{proof} In view of  Theorem \ref{2.6}, it is obvious that $\beta_{m+l-1,\;n}(TS)=0$. On the other hand,
since $T$ and $S$ are doubly commuting operators, it follows by \cite[Corollary 3.9]{CM} that
$$\beta_q(TS)=\sum_{0\leq k\leq q}\binom{q}{k}T^{*k}\beta_{q-k}(T)T^k\beta_k(S).$$
From which it follows that
\begin{eqnarray*}
&&\beta_{m+l-2,\;n}(TS)\\&=&\big(TS)^{n}\beta_{m+l-2}(TS)\big(TS\big)^{n}\\&=&
\big(T^*\big)^{n}\big(S^*\big)^{n}\bigg( \sum_{0\leq k\leq m+l-2}\binom{m+l-2}{k}T^{*k}\beta_{m+l-2-k}(T)T^k\beta_k(S)\bigg)T^{n}S^{n}\\&=&
\sum_{0\leq k\leq m+l-2}\binom{m+l-2}{k}T^{*k}\big(T^*\big)^{n}\beta_{m+l-2-k}T^{n}T^k\big(S^*\big)^{n}\beta_k(S)S^{n}\\&=&
\sum_{0\leq k\leq l-2}\binom{m+l-2}{k}T^{*k}\big(T^*\big)^{n}\beta_{m+l-2-k}T^{n}T^k\big(S^*\big)^{n}\beta_k(S)S^{n}\\&&+\binom{m+l-2}{l-1}\big(T^*\big)^{n+l-l}\beta_{m-1}(T)T^{n+l-1}\big(S^*\big)^{n}\beta_{l-1}S^n
\\&&+\sum_{l\leq k\leq m+l-2}\binom{m+l-2}{k}T^{*k}\big(T^*\big)^{n}\beta_{m+l-2-k}T^{n}T^k\big(S^*\big)^{n}\beta_k(S)S^{n}.
\end{eqnarray*}
If $k\in [0,l-2]$, then $(m+l-2-k)\in [m,m+l-2]$  and hence $T^{*n}\beta_{m+l-2-k}(T)T^n=0$ by Corollary \ref{2.3}.  If $k\geq l$, then $S^{n}\beta_k(S)S^n=0$ also in view of Corollary \ref{2.3}.\par \vskip 0.2 cm \noindent Consequently, $TS$ is a $n$-quasi strict-$(m+l-1)$-isometry if and only if
$$\big(T^*\big)^{n+l-l}\beta_{m-1}(T)T^{n+l-1}\big(S^*\big)^{n}\beta_{l-1}(S)S^n=0.$$  Hence the proof is finished.
\end{proof}
\begin{theorem}
Let $ T\in {\mathcal L}({\mathcal H})$ and  $S \in  {\mathcal L}({\mathcal H})$. If $ T$ is a $n$-quasi strict-$m$-isometry
and $S$ is a $n$-quasi strict-$l$-isometry, then $T \otimes S$ on ${\mathcal H} \overline{\otimes} {\mathcal H}$ is a  $n$-quasi strict-$(m+l-$1)-isometry.
\end{theorem}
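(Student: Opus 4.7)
The plan is to reduce the tensor product situation to the doubly commuting product case handled by Theorem \ref{3.3}. Observe that $T \otimes S = (T \otimes I)(I \otimes S)$, where $T \otimes I$ and $I \otimes S$ are doubly commuting on ${\mathcal H} \overline{\otimes} {\mathcal H}$. Since $(T\otimes I)^k = T^k \otimes I$, one checks immediately that $\beta_k(T \otimes I) = \beta_k(T) \otimes I$, and similarly $\beta_k(I \otimes S) = I \otimes \beta_k(S)$. In particular $T \otimes I$ is an $n$-quasi strict $m$-isometry on ${\mathcal H} \overline{\otimes} {\mathcal H}$ and $I \otimes S$ is an $n$-quasi strict $l$-isometry; that $T \otimes S$ is at least an $n$-quasi-$(m+l-1)$-isometry is already known from the earlier proposition on tensor products.

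For strictness I would apply Theorem \ref{3.3} to the doubly commuting pair $(T \otimes I,\; I \otimes S)$. By that criterion, $T \otimes S$ is an $n$-quasi strict $(m+l-1)$-isometry if and only if
\begin{equation*}
E := \bigl((T \otimes I)^*\bigr)^{n+l-1}\beta_{m-1}(T \otimes I)(T \otimes I)^{n+l-1}\bigl((I \otimes S)^*\bigr)^n \beta_{l-1}(I \otimes S)(I \otimes S)^n \neq 0.
\end{equation*}
Using the tensor identities for $\beta_k$ noted above together with the fact that the $(T \otimes I)$-factors and $(I \otimes S)$-factors commute past one another, $E$ factorises cleanly as
\begin{equation*}
E = \bigl(T^{*(n+l-1)}\beta_{m-1}(T)T^{n+l-1}\bigr) \otimes \bigl(S^{*n}\beta_{l-1}(S)S^n\bigr),
\end{equation*}
which is nonzero if and only if both tensor factors are nonzero. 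The right factor is nonzero directly from the hypothesis that $S$ is an $n$-quasi strict $l$-isometry.

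The main obstacle is the left factor: showing $T^{*(n+l-1)}\beta_{m-1}(T)T^{n+l-1} \neq 0$ is strictly stronger than the bare strictness hypothesis on $T$, which only yields $T^{*n}\beta_{m-1}(T)T^n \neq 0$. The key is the matrix representation of Theorem \ref{th21}: $\overline{{\mathcal R}(T^n)}$ is $T$-invariant and $T_1 = T|_{\overline{{\mathcal R}(T^n)}}$ is a strict $m$-isometry. From $\beta_m(T_1) = T_1^{*}\beta_{m-1}(T_1)T_1 - \beta_{m-1}(T_1) = 0$ one obtains $T_1^{*(l-1)}\beta_{m-1}(T_1)T_1^{l-1} = \beta_{m-1}(T_1)$ by iteration. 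Hence, setting $z = T^n x \in \overline{{\mathcal R}(T^n)}$ and using $T^k z = T_1^k z$, a short computation gives
\begin{equation*}
\langle \beta_{m-1}(T)T^{n+l-1}x,\, T^{n+l-1}x\rangle = \langle \beta_{m-1}(T_1)T_1^{l-1}z,\, T_1^{l-1}z\rangle = \langle \beta_{m-1}(T_1)z, z\rangle = \langle \beta_{m-1}(T)T^n x,\, T^n x\rangle
\end{equation*}
for every $x \in {\mathcal H}$. Thus the self-adjoint operators $T^{*(n+l-1)}\beta_{m-1}(T)T^{n+l-1}$ and $T^{*n}\beta_{m-1}(T)T^n$ induce the same quadratic form, and since the latter is nonzero by strictness of $T$, so is the former. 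This completes the plan.
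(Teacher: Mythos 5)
Your proof is correct, and it takes a mildly different route from the paper's. The paper expands $\beta_{m+l-2,\,n}(T\otimes S)$ directly from the formula $\beta_q(T\otimes S)=\sum_{0\le k\le q}\binom{q}{k}T^{*k}\beta_{q-k}(T)T^k\otimes\beta_k(S)$ and kills every term except $k=l-1$, arriving at the same factorization
$\beta_{m+l-2,\,n}(T\otimes S)=\bigl(T^{*(n+l-1)}\beta_{m-1}(T)T^{n+l-1}\bigr)\otimes\bigl(S^{*n}\beta_{l-1}(S)S^{n}\bigr)$
that you obtain by feeding the doubly commuting pair $(T\otimes I,\ I\otimes S)$ into the criterion of Theorem 3.3; the two computations are essentially the same one, packaged differently, and your reduction requires the easy (and correctly stated) observation that $\beta_{k,\,n}(T\otimes I)=\beta_{k,\,n}(T)\otimes I$, so that $T\otimes I$ and $I\otimes S$ inherit the strictness hypotheses. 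Where your write-up genuinely adds value is the last step: the paper simply declares that this factorization ``means'' strictness, leaving unverified that the left tensor factor is nonzero, whereas you correctly identify that $T^{*(n+l-1)}\beta_{m-1}(T)T^{n+l-1}\neq0$ is not the verbatim strictness hypothesis on $T$ (which only gives $T^{*n}\beta_{m-1}(T)T^{n}\neq0$) and you prove the needed equality of the two self-adjoint operators via the invariant-subspace representation and equality of quadratic forms. That argument is sound, though it can be shortened: your identity $\langle\beta_{m-1}(T)T^{n+l-1}x,T^{n+l-1}x\rangle=\langle\beta_{m-1}(T)T^{n}x,T^{n}x\rangle$ is exactly identity (3.2) of Proposition 3.1 applied with $k=l-1$, so you could cite that instead of re-deriving it through $T_1$.
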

\begin{proof}
In view of \cite[Corollary 3.10]{CG1}, it follows that
$$\beta_q(T\otimes S)=\sum_{0\leq k\leq q}\binom{q}{k}T^{*k}\beta_{q-k}(T)T^k\otimes\beta_k(S).$$ By calculations we have
\begin{eqnarray*}
&&\beta_{m+l-2,\;n}(T\otimes S)\\&=&\big(T\otimes S\big)^{*n}\beta_{m+l-2}(T\otimes S)\big(T\otimes S\big)^n\\&=&
\big(T^{*n}\otimes S^{*n}\big)\bigg(  \sum_{0\leq k\leq m+l-2}\binom{m+l-2}{k}T^{*k}\beta_{m+l-2-k}(T)T^k\otimes\beta_k(S)\bigg)\big(T^n\otimes S^n\big)\\&=&
 \sum_{0\leq k\leq m+l-2}\binom{m+l-2}{k}T^{*k}T^{*n}\beta_{m+l-2-k}(T)T^nT^k\otimes S^{*n}\beta_k(S)S^n.
\end{eqnarray*}
A similar arguments as in the proof of Theorem \ref{3.3} give
$$\beta_{m+l-2,\;n}(T\otimes S)=\big(T^*\big)^{n+l-l}\beta_{m-1}(T)T^{n+l-1}\otimes \big(S^*\big)^{n}\beta_{l-1}(S)S^n.$$ This
means that $T\otimes S$ is a $n$-quasi strict-$(m+l-1)$-isometry as required.
\end{proof}
\par \vskip 0.2  cm \noindent
In \cite[Theorem 3.1]{BJZ} it has been proved that if $T\in {\mathcal L}({\mathcal H})$ is a strict $m$-isometry, then the list
of operators $\{ \;T^{*k}T^k, k = 0, 1,\;\cdots m - 1 \;\}$ is linearly independent which is equivalent to
that $\{\;\beta_k(T), k = 0, 1,\;\cdots,m-1 \;\}$ is linearly independent.\par \vskip 0.2 cm \noindent In the following proposition we extend this result to $n$-quasi strict-$m$-isometry as follows.
\begin{proposition}
If $T\in {\mathcal L}({\mathcal H})$ is an $n$-quasi strict $m$-isometry,  then the list of operators
$$\{\;\beta_{k,\;n}(T), k = 0, 1,\;\cdots,m-1 \;\}$$ is linearly independent.
\end{proposition}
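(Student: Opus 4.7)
The plan is to reduce the claim to the known linear independence result for strict $m$-isometries (\cite[Theorem 3.1]{BJZ}) by exploiting the upper-triangular representation of $T$ from Theorem \ref{th21}. If $\overline{\mathcal{R}(T^n)} = \mathcal{H}$, then $T$ itself is a strict $m$-isometry (for otherwise $T$ would be $n$-quasi-$(m-1)$-isometric, contrary to hypothesis), and the statement is immediate from the cited result. So I would assume $\overline{\mathcal{R}(T^n)} \neq \mathcal{H}$ and write
\[ T = \begin{pmatrix} T_1 & T_2 \\ 0 & T_3 \end{pmatrix} \quad \text{on} \quad \mathcal{H} = \overline{\mathcal{R}(T^n)} \oplus \mathcal{N}(T^{*n}), \]
with $T_1$ an $m$-isometry on $\overline{\mathcal{R}(T^n)}$ and $T_3^n = 0$. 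Because $T$ fails to be $n$-quasi-$(m-1)$-isometric, the implication $(2)\Rightarrow(1)$ of Theorem \ref{th21} applied at level $m-1$ forces $T_1$ to be a \emph{strict} $m$-isometry.

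The next step is to obtain a clean factorization of $\beta_{k,n}(T)$. Since $T_3^n = 0$, one gets $T^n = \begin{pmatrix} T_1^n & X \\ 0 & 0 \end{pmatrix}$ with $X := \sum_{j=0}^{n-1} T_1^j T_2 T_3^{n-1-j}$. Introducing the row operator
\[ V = \begin{pmatrix} T_1^n & X \end{pmatrix} : \overline{\mathcal{R}(T^n)} \oplus \mathcal{N}(T^{*n}) \longrightarrow \overline{\mathcal{R}(T^n)}, \]
a direct block computation (again using $T_3^{n+j} = 0$ for all $j \geq 0$) yields $T^{*(n+j)} T^{n+j} = V^* T_1^{*j} T_1^j V$, and summing with the binomial signs that define $\beta_k$ gives the key factorization
\[ \beta_{k,n}(T) = V^*\, \beta_k(T_1)\, V, \qquad 0 \leq k \leq m-1. \]
Crucially, $\mathcal{R}(V) = T_1^n\,\overline{\mathcal{R}(T^n)} + X\,\mathcal{N}(T^{*n}) = \mathcal{R}(T^n)$, which is dense in $\overline{\mathcal{R}(T^n)}$.

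Now suppose $\sum_{k=0}^{m-1} c_k \beta_{k,n}(T) = 0$ and set $C := \sum_{k=0}^{m-1} c_k \beta_k(T_1) \in \mathcal{L}(\overline{\mathcal{R}(T^n)})$. The factorization gives $V^* C V = 0$, i.e.\ $\langle C u, v \rangle = 0$ for every $u, v \in \mathcal{R}(V) = \mathcal{R}(T^n)$. Density of $\mathcal{R}(T^n)$ in $\overline{\mathcal{R}(T^n)}$ together with the boundedness of $C$ then force $C = 0$. Since $T_1$ is a strict $m$-isometry, \cite[Theorem 3.1]{BJZ} asserts that $\beta_0(T_1), \ldots, \beta_{m-1}(T_1)$ are linearly independent in $\mathcal{L}(\overline{\mathcal{R}(T^n)})$, whence $c_0 = \cdots = c_{m-1} = 0$, as required.

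The main technical point is the factorization $\beta_{k,n}(T) = V^* \beta_k(T_1) V$: it is a careful but routine block-matrix calculation that rests essentially on $T_3^n = 0$. Once it is in hand, the linear independence descends from $T_1$ to $T$ essentially automatically through the density of $\mathcal{R}(T^n)$ in $\overline{\mathcal{R}(T^n)}$ and the cited result for strict $m$-isometries.
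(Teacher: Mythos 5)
Your proof is correct, but it follows a genuinely different route from the one in the paper. You reduce the statement to the known linear independence result for strict $m$-isometries (\cite[Theorem 3.1]{BJZ}) applied to $T_1 = T|_{\overline{\mathcal{R}(T^n)}}$, via the factorization $\beta_{k,n}(T) = V^*\beta_k(T_1)V$ with $V = \begin{pmatrix} T_1^n & X \end{pmatrix}$ (your computation $T^{*(n+j)}T^{n+j} = V^*T_1^{*j}T_1^jV$, resting on $T_3^n = 0$, checks out), and then pass from $V^*CV = 0$ to $C = 0$ using density of $\mathcal{R}(V) = \mathcal{R}(T^n)$ in $\overline{\mathcal{R}(T^n)}$; your observation that strictness of $T$ forces strictness of $T_1$ via the $(2)\Rightarrow(1)$ direction of Theorem \ref{th21} at level $m-1$ is also sound. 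The paper instead gives a purely algebraic recursion, inspired by \cite{CGU2}: from the identity $\beta_{k+1}(T) = T^*\beta_k(T)T - \beta_k(T)$ it deduces that a vanishing combination $\sum_k \lambda_k\beta_{k,n}(T) = 0$ propagates to $\sum_k \lambda_k\beta_{k+j,n}(T) = 0$ for every $j$, and then uses Corollary \ref{cor23} (an $n$-quasi-$m$-isometry is an $n$-quasi-$k$-isometry for $k \geq m$, so $\beta_{k+j,n}(T) = 0$ once $k+j \geq m$) together with $\beta_{m-1,n}(T) \neq 0$ to peel off $\lambda_0, \lambda_1, \ldots$ one at a time. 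Your approach buys an explicit structural factorization and transparently explains \emph{why} the result is an instance of the $m$-isometry case, at the cost of the block-matrix computation and the density argument (and note that in your dense-range case the conclusion is not literally ``immediate'' from \cite{BJZ} --- you still need the same density step with $V = T^n$); the paper's approach is shorter and self-contained once Corollary \ref{cor23} is available, requiring no decomposition of the space and no appeal to the strict $m$-isometry result.
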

\begin{proof}  The outline of the proof is inspired from \cite{CGU2}.\par \vskip 0.2 cm \noindent

It was observed in \cite{CGU1} that  $\beta_{k}(T)=T^*\beta_{k-1}(T)T-\beta_{k-1}(T)$  \;\;for all $k\geq 1$.\par \vskip 0.2 cm \noindent
We will
just write \begin{eqnarray*}
\beta_{m,\;n}(T)=T^{*n}\beta_m(T)T^n&=&T^{*n}\bigg(T^*\beta_{m-1}(T)T-\beta_{m-1}(T)\bigg)T^n\\&=&
T^{*n+1}\beta_{m-1}(T)T^{n+1}-T^{*n}\beta_{m-1}(T)T^n.
\end{eqnarray*}
Now assume that for some complex numbers $\lambda_k,$
$$\sum_{0\leq k \leq m-1}\lambda_k\beta_{k,\;n}(T)=0$$ or equivalently
$$\sum_{0\leq k \leq m-1}\lambda_kT^{*n}\beta_kT^n(T)T^n=0.$$ Multiplying the above equation on the left and right by $T^*$ and $T$ and subtracting two
equations, we have
$$\sum_{0\leq k\leq m-1}\lambda_k\bigg(T^{*n+1}\beta_{k}(T)T^{n+1}-T^{*n}\beta_k(T)T^n\bigg)=\sum_{0\leq k \leq m-1}\lambda_k\beta_{k+1,\;n}(T)=0$$
By applying the same procedure to the equation $\displaystyle\sum_{0\leq k \leq m-1}\lambda_k\beta_{k+1,\;n}(T)=0$ we get
$$\sum_{0\leq k \leq m-1}\lambda_k\beta_{k+2,\;n}(T)=0.$$  By continuing this process we obtain

 $$\sum_{0\leq k \leq m-1}\lambda_k\beta_{k+j,\;n}(T)=0\;\;\;\text{for all}\;\;j\in \mathbb{N}.$$
Since that every $n$-quasi-$m$-isometric operator is  $n$-quasi-$k$-isometric operator for all $k\geq m$  (Corollary \ref{cor23} ) we have the following cases:
\par \vskip 0.2 cm \noindent For $j=m-1$,
 $\displaystyle\sum_{0\leq k \leq m-1}\lambda_k\beta_{k+j,\;n}(T)=0 \Rightarrow\lambda_0\beta_{m-1,\;n}(T)=0,$ so $\lambda_0=0.$ \par \vskip 0.2 cm \noindent
 For $j=m-2$, $\displaystyle\sum_{0\leq k \leq m-1}\lambda_k\beta_{k+j,\;n}(T)=0 \Rightarrow\lambda_1\beta_{m-1,\;n}(T)=0,$ so $\lambda_1=0.$ \par \vskip 0.2 cm \noindent Continuing this process we see that all $\lambda_k = 0$ for $k=2,\;\cdots,\;m-1$. Hence the result is proved.
\end{proof}

\end{document}